\newtheorem{theorem}{Theorem}[section]
\newtheorem{corollary}[theorem]{Corollary}
\newtheorem{lemma}[theorem]{Lemma}
\newtheorem{question}{Question}
\theoremstyle{definition}
\newtheorem{claim}{Claim}[theorem]
\begin{document}

\title[Continuous curves of nonmetric pseudo-arcs]{Continuous curves of nonmetric pseudo-arcs and semi-conjugacies to interval maps}

\author{Jan P. Boro\'nski}
\address[J. P. Boro\'nski]{Faculty of Applied Mathematics,
AGH University of Science and Technology,
al. Mickiewicza 30,
30-059 Krak\'ow,
Poland -- and -- National Supercomputing Centre IT4Innovations, Division of the University of Ostrava,
Institute for Research and Applications of Fuzzy Modeling,
30. dubna 22, 70103 Ostrava,
Czech Republic}
\email{jan.boronski@osu.cz}
\author{Michel Smith}
\address[Michel Smith]{Department of Mathematics and Statistics, Auburn University, AL 36849, USA}
\email{smith01@auburn.edu}
\subjclass[2010]{Primary 54F15; Secondary 54F65.}

\keywords{homogeneous, pseudo-arc, pseudo-circle, continuous decomposition, Whitney map, curve, semi-conjugacy}
\begin{abstract}
In 1985 M. Smith constructed a nonmetric pseudo-arc; i.e. a Hausdorff homogeneous, hereditary equivalent and hereditary indecomposable continuum. Taking advantage of a decomposition theorem of W. Lewis, he obtained it as a long inverse limit of metric pseudo-arcs with monotone bonding maps. Extending his approach, and the results of Lewis on lifting homeomorphisms, we construct a nonmetric pseudo-circle, and new examples of homogeneous 1-dimensional continua; e.g. a circle and solenoids of nonmetric pseudo-arcs. Among many corollaries we also obtain an analogue of another theorem of Lewis from 1984: any interval map is semi-conjugate to a homeomorphism of the nonmetric pseudo-arc.

\end{abstract}
\maketitle

\section{Introduction}
The present paper is motivated by a well-known open problem in topology.
\vspace{0.25cm}

\noindent
\textbf{Problem:} \textit{Classify homogeneous compacta.}
\vspace{0.25cm}

\noindent
Mislove and Rogers showed in \cite{MisloveRogers} that each homogeneous compact metric space is the Cartesian product of a homogeneous continuum $Y$
and the set $C$, where $C$ is the Cantor set, or a finite space. Recall that a compact space $X$ is \textit{homogeneous} if for any two points $x$ and
$y$ in $X$ there exists a homeomorphism $h: X \to X$ such that $h(x) = y$. Among the well-known
homogeneous compact spaces there are objects such as the circle, Cantor set, surfaces of the sphere and torus, and 1-dimensional Vietoris solenoids. Finite spaces are homogenous in a trivial way. Despite the fact that some spaces are not homogeneous, they are very close to being so and in this case it is useful to look at their \textit{degree of homogeneity}. We say that the degree of homogeneity of the space $X$ is $1/n$, where $n$ is a cardinal number, when the group of homeomorphisms of $X$ has exactly $n$ orbits; i.e. $X$ has exactly $n$ topologically distinct types of points. A classical nontrivial example of a $1/2$-homogeneous continuum is the Sierpi\'nski carpet \cite{Krasinkiewicz}. There is another group of compact spaces that are very close to being homogeneous in the sense that their homeomorphism groups act minimally on them. Namely, we say that $X$ is \textit{nearly homogeneous} when for any pair of points $x$ and $y$ in $X$ and any open neighborhood $U$ of $y$ there exists a homeomorphism $h:X \to X$ such that $h(x)$ belongs to $U$. There are examples of spaces that are nearly homogeneous, although their degree of homogeneity is $1/n$ for no $n\in\mathbb{N}$. One such example is a compactum that belongs to a class of \textit{hereditarily indecomposable} continua, particularly important from the view point of this study. We say that $Y$ is an \textit{indecomposable} continuum if it cannot be represented as the sum of two proper subcontinua. Continuum $Y$ is \textit{hereditarily indecomposable} if every subcontinuum is indecomposable. Two particularly prominent examples of hereditarily indecomposable continua are a \textit{pseudo-arc }and \textit{pseudo-circle}, sometimes referred to as bad fractals due to their very complex structure and certain degree of self-similarity. Pseudo-arc is a space first constructed by B. Knaster in the 20s of the XX century \cite{Knaster}. It is a one-dimensional planar continuum, that does not separate the plane, and is homeomorphic to any
of its subcontinua \cite{Mo}. In the 40s R.H. Bing showed that the pseudo-arc is homogeneous \cite{Bi2} and later, with
Jones, he constructed a homogeneous (decomposable) circle of pseudo-arcs \cite{Bi3}. Bing also constructed
the pseudo-circle \cite{Bi}, the unique hereditarily indecomposable planar cofrontier. The pseudo-circle is
nearly homogeneous \cite{KeRo}, though neither homogeneous \cite{Fe}, \cite{Ro2} nor $1/n$-homogeneous for any $n>1$. Lewis showed that for any 1-dimensional continuum $Y$ there exists a continuum $Y'$ that admits a continuous and monotone decomposition into pseudoarcs, and if $Y$ is homogeneous then so is $Y'$ \cite{Le}. Most recently in \cite{HoOv} Hoehn and Oversteegen classified all planar homogeneous continua as: the circle, pseudo-arc, and circle of pseudo-arcs. It is noteworthy that the pseudo-arc appeared quite recently also in studies concerning other areas of mathematics, such as functional analysis and logic. In 2005, Rambla \cite{Rambla} and Kawamura \cite{Kawamura} showed independently, that the pseudo-arc minus a point provides a counterexample to the Conjecture of Wood, formulated in the early 80’s, about the existence of certain Banach spaces with transitive group of isometries. About the same time Solecki and Irwin \cite{Solecki} obtained the pseudo-arc as the quotient space of a projective Fr\"iss\'e limit of reflexive linear graphs. Expanding their tools, in 2014 Kwiatkowska showed that the pseudo-arc has a dense conjugacy class of homeomorphisms \cite{Kwiatkowska}, reproving the result of Oppenheim from 2008 \cite{Op}. These findings further confirm the importance of research on hereditary indecomposable spaces and their significance beyond a narrow area of mathematics. Moreover Smith's nonmetric pseudo-arc has recently provided another counterexample to Wood's conjecture \cite{BoronskiSmith}, which highlights the importance of including the nonmetric hereditarily indecomposable continua in future studies. In the present paper, building on the approach of the second author, and heavily relying on Lewis' results for the metric pseudo-arc, we extend Lewis' results to prove the following.
\vspace{0.25cm}

\noindent
\textbf{Theorem A.} \textit{For every one-dimensional metric continuum $M$, there exists a continuum $M_{\omega_1}$ such that $M_{\omega_1}$ has a continuous decomposition into nonmetric pseudo-arcs, and the decomposition space is homeomorphic to $M$. Additionally:
\begin{enumerate}
	\item If $M$ is homogeneous, then so is $M_{\omega_1}$.
	\item Every homeomorphism of $M$ lifts to a semi-conjugate homeomorphism of $M_{\omega_1}$.
	\item $M_{\omega_1}$ admits a Whitney map onto a long arc.
\end{enumerate}
}
\vspace{0.25cm}

\noindent
We also give an analogue of another result of Lewis. In \cite{WL} he showed that any self-map of an arc-like continuum is semi-conjugate to a pseudo-arc homeomorphism. In the present paper we exploit his result to connect one-dimensional dynamics to the dynamics on Smith's nonmetric pseudo-arc. 
\vspace{0.25cm}

\noindent
\textbf{Theorem B.} \textit{Every interval map is semi-conjugate to a homeomorphism of Smith's nonmetric pseudo-arc.
}
\vspace{0.25cm}

The paper is organized as follows. In Section 2 we recall some preliminary facts, including the results of Lewis and properties of Smith's nonmetric pseudo-arc. In Section 3 we describe the construction of continuous curves of nonmetric pseudo-arcs and discuss some curious applications of the method. One of them is the aforementioned result on the semi-conjugacy of interval maps with homeomorphisms of the nonmetric pseudo-arc. In Section 4 we show that our long inverse limit approach cannot be continued beyond $\omega_1$, as in a sense it stabilizes at this step. In Section 5 we observe that any continuous curve of nonmetric pseudo-arcs from Section 3 admits a generalized Whitney map onto the long arc. We conclude our paper in Section 6 with some questions on Smith's nonmetric pseudo-arc. 

\section{Preliminary results}
Recall that given a graph $\mathcal{G}$,  a continuum $X$ is called $\mathcal{G}$-like if for each open cover $\mathcal{U}$ of $X$ there is a finite open refinement $\{U_1,\ldots,U_t\}$ whose nerve is $\mathcal{G}$. If $\mathcal{G}=[0,1]$ then $X$ is called arc-like or chainable. Below we recall the results of Lewis, on which we shall heavily rely.
\begin{theorem}[Lewis \cite{Le}] \label{WL1} Suppose that $M$ is a one-dimensional metric continuum.  Then there exists a one-dimensional continuum $\hat M$ and a continuous decomposition $G$ of $\hat M$ into pseudo-arcs so that the decomposition space $\hat M/G$ is homeomorphic to $M$.  Furthermore if $\pi: \hat M \rightarrow \hat M/G$ is the mapping so that $\pi(x)$ is the unique element of $G$ containing $x$ and $h: \hat M/G \rightarrow \hat M/G$ is a homeomorphism then there exists a homeomorphism $\hat h: \hat M \rightarrow \hat M$ so that $\pi \circ \hat h = h \circ \pi$.
\end{theorem}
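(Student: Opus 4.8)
The plan is to build $\hat M$ as an inverse limit of finite graphs with carefully ``crooked'' bonding maps, generalizing the Bing--Jones circle of pseudo-arcs to an arbitrary $1$-dimensional base. First, by a standard theorem (Freudenthal) every one-dimensional metric continuum is an inverse limit of finite connected graphs, so write $M=\invlim(G_n,\phi^{n+1}_n)$ where each $\phi^{n+1}_n\colon G_{n+1}\to G_n$ is a surjective PL map; after subdividing we may assume the $\phi^{n+1}_n$ are simplicial and the mesh of the $n$-th triangulation tends to $0$. The continuum $\hat M$ will be presented as $\invlim(H_n,\psi^{n+1}_n)$ with $H_n$ finite graphs and with simplicial ``collapsing'' maps $p_n\colon H_n\to G_n$ satisfying $\phi^{n+1}_n\circ p_{n+1}=p_n\circ\psi^{n+1}_n$, so that the $p_n$ induce a map $\pi\colon\hat M\to M$; then $\dim\hat M\le 1$ automatically, $G:=\{\pi^{-1}(x):x\in M\}$ is the sought decomposition, and $\hat M/G\cong M$ once $\pi$ is shown to be a closed, hence quotient, map --- which it is, $\hat M$ being compact Hausdorff.

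The heart of the argument is a ``pattern'' refinement step. Given $G_n$ with a fixed triangulation, one builds $H_n$ as a finite graph in which over each vertex of $G_n$ sits a small arc, over each edge sits a very fine, repeatedly doubled-back (crooked) chain of such arcs, and the pieces are glued coherently at the branch vertices --- precisely the combinatorial device Bing uses to force hereditary indecomposability of a chainable continuum, now installed simultaneously over the whole graph. One chooses $\psi^{n+1}_n$ so that the $p_n$-preimage of each vertex of $G_n$, refined through stages $n+1,n+2,\dots$, is traced out by a genuinely crooked chain. Then each element of $G$ is on the one hand chainable (its coordinate patterns are chains) and on the other hand hereditarily indecomposable (by the crookedness of the composed bonding maps), and being nondegenerate it is a pseudo-arc by Bing's characterization; moreover the fibers $\pi^{-1}(x)$ are continua, so $\pi$ is monotone and $\dim\hat M=1$. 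One must also check that $G$ is a \emph{continuous} decomposition: upper semicontinuity is automatic since $\pi$ is closed onto the Hausdorff space $M$, while lower semicontinuity --- continuity of the fibers in the Hausdorff metric --- is arranged by insisting that the crooked pattern over a point of $G_n$ depend continuously on the position of that point along its edge, so that patterns over nearby base points are parallel up to a small shift.

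For the lifting, given a homeomorphism $h\colon M\to M$ one first realizes $h$ as an inverse limit of PL maps $h_n\colon G_{m(n)}\to G_n$ between terms of the system (after passing to a subsequence, and absorbing small approximation errors into later stages). One then lifts each $h_n$ to a map $\hat h_n\colon H_{m(n)}\to H_n$ respecting the crooked patterns: this is exactly where Lewis's lemmas on maps and homeomorphisms between pseudo-arc defining sequences are used --- any two crooked patterns over corresponding cells are related by a homeomorphism of the local pseudo-arc factor, and these local homeomorphisms can be chosen to agree on overlaps and to commute, up to controllable error, with the bonding maps. Taking the inverse limit of the $\hat h_n$ produces $\hat h\colon\hat M\to\hat M$ with $\pi\circ\hat h=h\circ\pi$; running the same construction for $h^{-1}$ (or for the whole group generated by $h$) and checking that the lifts compose correctly shows $\hat h$ is a homeomorphism.

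I expect the main obstacle to be the coherence bookkeeping across these two intertwined inductions: one must simultaneously (i) increase crookedness fast enough that every fiber becomes a pseudo-arc, (ii) keep the patterns over nearby base points parallel enough to secure lower semicontinuity, and (iii) leave enough freedom that an arbitrary prescribed graph map --- in particular a given homeomorphism of $M$ --- can be lifted pattern by pattern while still commuting with the bonding maps. These requirements compete, and the technical core of Lewis's work is a choice of patterns and refinement speed reconciling all three; Bing's crookedness combinatorics and Lewis's homeomorphism-realization lemmas are the ingredients, specific to the pseudo-arc, that make this possible.
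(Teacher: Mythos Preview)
The paper does not prove this statement: Theorem~\ref{WL1} is quoted from Lewis~\cite{Le} as a preliminary result, with no argument supplied beyond the citation. So there is no ``paper's own proof'' to compare against.

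That said, your outline is a faithful sketch of Lewis's original construction in~\cite{Le}: represent $M$ as an inverse limit of finite graphs, thicken each graph by installing crooked chain patterns over edges and arcs over vertices, and take the inverse limit to obtain $\hat M$ with a monotone open map onto $M$ whose fibers are pseudo-arcs; the lifting of homeomorphisms is then carried out by lifting graph maps stage by stage through the patterns. Your closing paragraph correctly identifies where the real work lies --- balancing crookedness, continuity of the decomposition, and enough flexibility to lift maps --- and that this is exactly the technical content of Lewis's paper. If you intend this as a self-contained proof rather than a summary, you would need to supply those combinatorial lemmas explicitly; as written it is an accurate roadmap but not a proof.
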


Thus we have an open and closed map $\pi$ so that the following diagram commutes:
\begin{center}
\begin{displaymath}
\xymatrix{
\hat M \ar@{<-}[r]^{\hat h }\ar@{->}[d]^{\pi}& \hat M \ar@{->}[d]^{\pi}\\
\hat M / G = M \ar@{<-}[r]^{h}& M = \hat M / G
}
\end{displaymath}
\end{center}

\begin{theorem}[Lewis \cite{Le}]\label{WL2} Under the hypothesis of Theorem \ref{WL1},  if for some element $g \in G$ we have $x, y \in g$ then there exists a homeomorphism $\hat h$ so that $\hat h(x) = y$ and $\pi \circ \hat h = \pi$.
\end{theorem}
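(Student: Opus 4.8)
The plan is to realize $\hat h$ as a \emph{vertical} homeomorphism, i.e.\ one that carries every element of $G$ onto itself (so that $\pi\circ\hat h=\pi$ holds automatically) and whose restriction to the element $g$ sends $x$ to $y$. Since $g$ is a pseudo-arc and the pseudo-arc is homogeneous \cite{Bi2}, there is a homeomorphism $\phi_0\colon g\to g$ with $\phi_0(x)=y$. Everything thus reduces to an extension problem: given a self-homeomorphism of a single fibre of $\pi$, extend it to a fibre-preserving homeomorphism of all of $\hat M$.

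I would first extract from the construction behind Theorem~\ref{WL1} a local product description of the decomposition: $\pi(g)$ has arbitrarily small neighbourhoods $U$ in $M$ admitting homeomorphisms $\theta_U\colon\pi^{-1}(\overline U)\to\overline U\times P$ with $\mathrm{pr}_{\overline U}\circ\theta_U=\pi$, where $P$ is the metric pseudo-arc. (In the inverse-limit presentation $\hat M=\varprojlim\hat M_n$ by which $\hat M$ is built, one may take each $\hat M_n$ to be a finite union of ``tubes'' $e\times I$ over the edges $e$ of a graph, glued along end-arcs, with $\hat M$ the inverse limit under bonding maps that follow the graph maps in the base and are crooked in the fibre direction, and $\pi$ induced by collapsing the fibre coordinates; such trivialisations are then available over any $U$ small enough to lie inside a single tube at every level.) Transporting $\phi_0$ through $\theta_U$ converts the problem over $\pi^{-1}(\overline U)$ into: extend the induced self-homeomorphism of $\{\pi(g)\}\times P$ to a fibre-preserving homeomorphism $\Phi$ of $\overline U\times P$ that is the identity on a neighbourhood of $\partial U\times P$. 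One then sets $\hat h=\theta_U^{-1}\,\Phi\,\theta_U$ on $\pi^{-1}(\overline U)$ and $\hat h=\mathrm{id}$ elsewhere; the gluing is legitimate precisely because $\Phi$ and the identity agree near $\partial U\times P$, and $\pi\circ\hat h=\pi$ because $\Phi$ preserves the first coordinate.

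The crux — and the step I expect to cause the real difficulty — is producing this $\Phi$ that becomes the identity near the boundary. The naive device, writing $\Phi(u,z)=(u,\gamma(u)(z))$ for a continuous path $\gamma$ in $\mathrm{Homeo}(P)$ running from the identity through $\phi_0$, founders on the rigidity of $\mathrm{Homeo}(P)$, which carries no such deformation in general, so a bare collar argument on $P$-fibres is unavailable. What rescues the argument is that at finite stages of the inverse limit the fibres are \emph{arcs}, whose homeomorphism groups are perfectly flexible. Concretely, Bing's proof of homogeneity in fact produces $\phi_0$ as a finite composition of homeomorphisms each moving every point only within the links of a prescribed fine chain on $P$; such a fine move is, at a sufficiently deep level $n$, visible as a homeomorphism of the stage-$n$ fibre arc moving points within the pulled-back chain, and \emph{that} homeomorphism can be tapered to the identity across the base direction by an ordinary collar argument (with a Urysohn function on $\overline U$ equal to $1$ near $\pi(g)$ and to $0$ near $\partial U$), because it is a homeomorphism of an interval. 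One installs these tapered adjustments level by level, deferring finer moves to deeper levels; Lewis's bonding maps are arranged so that the resulting stage-$n$ homeomorphisms are mutually compatible, and their inverse limit is the desired $\hat h$, for which $\hat h(x)=\phi_0(x)=y$. The technical heart of the proof is exactly this level-by-level matching, and it is what the detailed construction underlying Theorem~\ref{WL1} is set up to support; the remaining verifications (that $\hat h$ is a homeomorphism, and that $\pi\circ\hat h=\pi$) are routine.
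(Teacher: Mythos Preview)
The paper does not prove this statement at all: Theorem~\ref{WL2} is one of the preliminary results quoted from Lewis~\cite{Le} in Section~2, stated without proof and used as a black box later (e.g.\ in Theorem~3.4). So there is no ``paper's own proof'' to compare against; your sketch is an attempt to reconstruct Lewis's original argument, not anything the present authors supply.

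As for the sketch itself, the overall strategy---produce a fibre-preserving homeomorphism by working at the finite stages of the inverse limit defining $\hat M$, where the fibres are arcs and tapering to the identity via a Urysohn function is unproblematic, then pass to the limit---is indeed the mechanism behind Lewis's proof. Two points deserve caution. First, your appeal to a local product structure $\pi^{-1}(\overline U)\cong\overline U\times P$ at the level of $\hat M$ is not generally available: continuous decompositions into pseudo-arcs are not locally trivial bundles, and you should not expect such $\theta_U$ to exist on $\hat M$ itself. What \emph{is} available, and what Lewis actually uses, is the product-like structure at each finite stage $\hat M_n$ (tubes $e\times I$ over graph edges), which you do eventually invoke; the argument should be phrased entirely at that level rather than first positing a trivialisation of $\hat M$ and then retreating from it. Second, the sentence ``Lewis's bonding maps are arranged so that the resulting stage-$n$ homeomorphisms are mutually compatible'' is where the real work lies, and as written it is an assertion rather than an argument: one must check that the tapered homeomorphism at stage $n+1$ genuinely covers the one at stage $n$ under the crooked bonding map, which requires controlling how the crookedness interacts with the fibrewise adjustments. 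That compatibility is not automatic and is precisely what Lewis's construction in~\cite{Le} is engineered to deliver.
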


\begin{theorem}[Lewis \cite{Le}]\label{WL3}  Suppose that $X$ is a pseudo-arc and $G$ is a continuous collection of pseudo-arcs filling up $X$ so that for each $x \in X$, $\pi(x)$ is the element of $G$ containing $x$ and $Y = X/G$.  Then $Y$ is a pseudo-arc, and if $h: Y \rightarrow Y$ is a homeomorphism then there exists a homeomorphism $\hat h: X \rightarrow X$ so that $\pi \circ \hat h = h \circ \pi$.
\end{theorem}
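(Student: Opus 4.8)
The plan is to establish the two assertions in turn: first that $Y=X/G$ is a pseudo-arc, and then that an arbitrary homeomorphism $h$ of $Y$ lifts through $\pi$. For the first assertion I would invoke Bing's topological characterization of the pseudo-arc: a nondegenerate metric continuum is a pseudo-arc if and only if it is chainable and hereditarily indecomposable. Since $Y$ is a Hausdorff continuous image of the compact metric continuum $X$ it is itself a metric continuum, and (assuming $G$ is not the trivial one-element decomposition) it is nondegenerate; so only chainability and hereditary indecomposability remain to be checked.

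Hereditary indecomposability of $Y$ is the easy half. The quotient map $\pi$ is monotone, since its point-inverses are exactly the elements of $G$, which are continua, and it is closed, being a map between compacta. Hence for any subcontinuum $K\subseteq Y$ the set $\pi^{-1}(K)$ is a subcontinuum of $X$, and if $K=A\cup B$ with $A,B$ proper subcontinua of $K$, then $\pi^{-1}(K)=\pi^{-1}(A)\cup\pi^{-1}(B)$ with each of $\pi^{-1}(A),\pi^{-1}(B)$ a proper subcontinuum of $\pi^{-1}(K)$ (using surjectivity of $\pi$), contradicting hereditary indecomposability of $X$. For chainability I would exploit that $G$ is a \emph{continuous} decomposition, so that $\pi$ is simultaneously open and closed. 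Given a finite open cover $\mathcal V$ of $Y$, pull it back to the open cover $\pi^{-1}(\mathcal V)$ of the chainable continuum $X$ and choose a chain $\mathcal U=(U_1,\dots,U_n)$ covering $X$ and refining $\pi^{-1}(\mathcal V)$; the point is that $\pi(\mathcal U)$ need not be a chain. I would remedy this by arranging $\mathcal U$ to consist of $\pi$-saturated links $U_i=\pi^{-1}(W_i)$ with $W_i$ open in $Y$ (openness of $\pi$ makes such saturated sets available), whereupon $(W_1,\dots,W_n)$ is a chain in $Y$ refining $\mathcal V$, because for saturated sets $\pi^{-1}(W_i)\cap\pi^{-1}(W_j)=\pi^{-1}(W_i\cap W_j)$, so the incidence pattern downstairs matches the one upstairs exactly. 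Carrying out this saturation while preserving the chain condition, for covers of arbitrarily small mesh, is the delicate point of this half, and it is precisely here that continuity of the decomposition (rather than mere upper semicontinuity) is used.

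For the lifting, fix a homeomorphism $h:Y\to Y$. The composition $q:=h\circ\pi$ is again a monotone, open, closed surjection of $X$ onto $Y$ whose point-inverses are precisely the elements of $G$; so producing $\hat h$ with $\pi\circ\hat h=h\circ\pi=q$ amounts to producing a self-homeomorphism of $X$ that carries, for each $g\in G$, the element $g$ onto the element $\pi^{-1}\bigl(h(\pi(g))\bigr)$ — that is, a self-homeomorphism of $X$ permuting the members of $G$ according to $h$. To build it I would set up a defining inverse system for $X$ adapted to $G$: a sequence of chains $\mathcal C_1,\mathcal C_2,\dots$ of $X$, each suitably crooked inside the previous one and of mesh tending to $0$, chosen (via the saturation trick above) so that the induced chains $\bar{\mathcal C}_n$ of $Y$ form a defining sequence for $Y$ and so that $\pi$ is realized as the limit of the pattern projections. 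One then approximates $h$ arbitrarily closely by homeomorphisms permuting the links of the $\bar{\mathcal C}_n$ and preserving their patterns, lifts these link-permutations to pattern-preserving homeomorphisms of $X$ upstairs, and passes to a limit, verifying that the limit exists, is a homeomorphism, and satisfies $\pi\circ\hat h=h\circ\pi$ on the nose.

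I expect the lifting to be the main obstacle. The analogous statement for the ambient space in Theorem~\ref{WL1} is proved by essentially the same machinery, and the real work is in the lift-of-a-link-permutation step — this is exactly where the homogeneity and crookedness technology of Bing and Lewis for the metric pseudo-arc is needed — together with guaranteeing that the approximating homeomorphisms converge uniformly, with uniformly convergent inverses, to a genuine homeomorphism that is \emph{exactly} compatible with $\pi$ rather than merely compatible up to small error. Chainability of $Y$ is a close second in difficulty, for the reasons indicated above; hereditary indecomposability of $Y$, by contrast, is immediate from monotonicity of $\pi$.
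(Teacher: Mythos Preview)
This theorem carries no proof in the paper: it is listed in Section~2 among the preliminary results and is attributed to Lewis \cite{Le}. Consequently there is no argument in the present paper against which to measure your proposal; what you have written is an attempt to reconstruct Lewis's original proof rather than the authors' own.

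That said, your outline is broadly in the right spirit. Hereditary indecomposability of $Y$ via monotonicity of $\pi$ is standard and correct. For chainability your saturation idea is plausible but, as you yourself flag, the step ``arrange the chain $\mathcal U$ to consist of $\pi$-saturated links while preserving the chain condition'' is exactly the hard part and is not justified by anything you wrote; simply noting that open saturations are available does not explain why a saturated \emph{chain} refinement exists. In Lewis's setting this comes out of the explicit construction of $X$ and $G$ rather than from an abstract argument about continuous decompositions. For the lifting, your plan (defining sequences of crooked chains compatible with $G$, lifting pattern-preserving link permutations, and passing to a uniform limit) is indeed the machinery Lewis uses in \cite{Le}, and you have correctly identified both the main obstacle and where Bing's crookedness technology enters. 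So: reasonable sketch, but be aware that the paper under review neither proves nor claims to prove this statement.
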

\begin{center}
\begin{displaymath}
\xymatrix{
X \ar@{<-}[r]^{\hat h }\ar@{->}[d]^{\pi}& X \ar@{->}[d]^{\pi}\\
Y \ar@{<-}[r]^{h}& Y
}
\end{displaymath}
\end{center}
\begin{theorem}[Lewis \cite{Le}]\label{semiconjugacy}
Suppose $X$ is a metric chainable continuum and $f: X\to X$ is a map. Then there exists a pseudo-arc homeomorphism $h:P\to P$ and a continuous surjection $p:P\to X$ such that $f \circ p=p\circ h$. In particular, $p$ gives a semi-conjugacy between $f$ and $h$.
\end{theorem}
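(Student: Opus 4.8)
\emph{Soft part (a reduction).} After replacing $X$ by the chainable subcontinuum $\bigcap_{n\geq 0}f^{n}(X)$ if necessary, we may assume $f$ is surjective (arc-likeness is inherited by subcontinua). It is then enough to produce a pseudo-arc $P_{0}$, a continuous surjection $\pi_{0}\colon P_{0}\to X$, and a continuous surjection $g_{0}\colon P_{0}\to P_{0}$ with $\pi_{0}\circ g_{0}=f\circ\pi_{0}$. Indeed, put $P:=\invlim(P_{0},g_{0})$. Being an inverse limit of arc-like continua, $P$ is arc-like; and being an inverse limit of hereditarily indecomposable continua with surjective bonding maps, $P$ is hereditarily indecomposable --- for if a subcontinuum $K\subseteq P$ were the union of two proper subcontinua $A$ and $B$, then each factor projection $\pi_{n}$ would give $\pi_{n}(K)=\pi_{n}(A)\cup\pi_{n}(B)$ with $\pi_{n}(K)$ a subcontinuum of the hereditarily indecomposable $P_{0}$, hence $\pi_{n}(A)=\pi_{n}(K)$ or $\pi_{n}(B)=\pi_{n}(K)$ for every $n$, and since a subcontinuum of an inverse limit equals the inverse limit of its factor projections this would force $A=K$ or $B=K$, a contradiction. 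So $P$ is a pseudo-arc, by Bing's characterization of the pseudo-arc as the arc-like hereditarily indecomposable continuum. Now the shift $\sigma\colon P\to P$ is a homeomorphism (since $g_{0}$ is surjective), the first-coordinate projection $\rho\colon P\to P_{0}$ is a continuous surjection with $\rho\circ\sigma=g_{0}\circ\rho$, and $p:=\pi_{0}\circ\rho\colon P\to X$ satisfies $f\circ p=f\circ\pi_{0}\circ\rho=\pi_{0}\circ g_{0}\circ\rho=\pi_{0}\circ\rho\circ\sigma=p\circ\sigma$. Hence $(P,\sigma,p)$ is the required semi-conjugacy.

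\emph{Hard part (lifting $f$ to a pseudo-arc over $X$).} Write $X=\invlim(A_{i},a_{i})$ with each $A_{i}$ an arc and each $a_{i}$ surjective, and, using Mioduszewski's theorem on mappings of inverse limits, realize $f$ at the level of the arc factors by surjections $\psi_{i}\colon A_{i+1}\to A_{i}$ with $a_{i}\circ\psi_{i+1}=\psi_{i}\circ a_{i+1}$. I would then construct by recursion on $i$ arcs $B_{i}$ together with surjections $b_{i}\colon B_{i+1}\to B_{i}$, $p_{i}\colon B_{i}\to A_{i}$, $g_{i}\colon B_{i+1}\to B_{i}$ such that: $a_{i}\circ p_{i+1}=p_{i}\circ b_{i}$ and $\psi_{i}\circ p_{i+1}=p_{i}\circ g_{i}$, so that $\pi_{0}:=\invlim p_{i}$ is a continuous surjection of $P_{0}:=\invlim(B_{i},b_{i})$ onto $X$ and $g_{0}:=\invlim g_{i}$ is a surjection of $P_{0}$ with $\pi_{0}\circ g_{0}=f\circ\pi_{0}$; the identity $b_{i}\circ g_{i+1}=g_{i}\circ b_{i+1}$, so that $g_{0}$ is well defined on the inverse limit; and, for every $i$ and every $\eps>0$, $\eps$-crookedness of some composition $b_{i}\circ b_{i+1}\circ\cdots\circ b_{i+k}$, so that $P_{0}$ is hereditarily indecomposable and hence, being arc-like, a pseudo-arc.

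\emph{Main obstacle.} The heart of the matter --- and the step I expect to be the real difficulty --- is this recursion: one has to force in enough crookedness to make $P_{0}$ hereditarily indecomposable while preserving the two factorizations that couple $P_{0}$ to $X$ and to $f$, and while keeping the identity $b_{i}\circ g_{i+1}=g_{i}\circ b_{i+1}$ alive. What makes it possible is that the projections $p_{i}\colon B_{i}\to A_{i}$ may be chosen very far from injective, so that a fresh layer of crooked folds inserted into $b_{i}$ can be routed inside the fibers of $p_{i}$ and hence becomes invisible after postcomposition with $p_{i}$, leaving $a_{i}\circ p_{i+1}=p_{i}\circ b_{i}$ undisturbed; the map $g_{i}$ is extended over the enlarged domain in tandem, so that it still covers $\psi_{i}$ and still commutes with the $b$'s. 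Carrying this out uniformly over all $i$ is the technical core of the argument; everything else --- the reduction above and Bing's characterization of the pseudo-arc --- is routine.
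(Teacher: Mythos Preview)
This theorem is not proved in the paper; it is quoted without proof as a preliminary result of Lewis (the theorem header cites \cite{Le}, while the introduction attributes the semi-conjugacy result to \cite{WL}). There is therefore no in-paper proof to compare your proposal against.

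As to the proposal itself: your soft reduction --- pass to the inverse limit $\varprojlim(P_{0},g_{0})$ so that a self-surjection $g_{0}$ becomes a shift homeomorphism --- is correct and standard, and your verification that the limit is again a pseudo-arc is fine. One wrinkle: replacing $X$ by $\bigcap_{n}f^{n}(X)$ does not recover the stated conclusion, since the resulting $p$ surjects only onto that intersection; but note that the conclusion $f\circ p=p\circ h$ with $p$ onto and $h$ a homeomorphism already \emph{forces} $f$ to be onto, so this is an issue with how the statement is phrased rather than with your reduction.

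The hard part, as you yourself flag, is where the content lies, and here the proposal is a plan rather than a proof. Two concrete gaps. First, Mioduszewski's theorem produces ladder diagrams that $\varepsilon$-commute, not diagrams that commute on the nose, so the exact identities $a_{i}\psi_{i+1}=\psi_{i}a_{i+1}$ you assume are generally unavailable; approximate commutativity would have to be threaded through the entire construction. Second, the recursive step --- inserting enough crookedness into $b_{i}$ inside the fibres of $p_{i}$ while simultaneously defining $g_{i}$ so that both $\psi_{i}p_{i+1}=p_{i}g_{i}$ and $b_{i}g_{i+1}=g_{i}b_{i+1}$ survive --- is precisely the technical core of Lewis's argument; you have named the obstacle accurately but not resolved it. So the outline is reasonable, but what remains is exactly the theorem.
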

The following result will also be essential to our proofs.
\begin{theorem}\cite{En} Suppose that $X = \varprojlim\{X_\alpha, f_\alpha^\beta \}_{\alpha < \beta < \omega_1}$ and $Y = \varprojlim\{Y_\alpha, G_\alpha^\beta \}_{\alpha < \beta < \omega_1}$ are inverse limits and there is an index $\gamma < \omega_1$ and a collection of mappings $\varphi_\delta$ for $\delta > \gamma$ the following diagram commutes:
\begin{center}
\begin{displaymath}
\xymatrix{
X_\gamma \ar@{<-}[r]^{ f_\gamma^\delta }\ar@{->}[d]^{\varphi_\gamma}& X_\delta \ar@{->}[d]^{\varphi_\delta}\\
Y_\gamma \ar@{<-}[r]^{g_\gamma^\delta}& Y_\delta
}
\end{displaymath}
\end{center}
Then there is an induced mapping $\varphi: X \rightarrow Y$ so that the following commute:
\begin{displaymath}
\xymatrix{
X_\gamma \ar@{<-}[r]^{ \pi_\gamma }\ar@{->}[d]^{\varphi_\gamma}& X \ar@{->}[d]^{\varphi}\\
Y_\gamma \ar@{<-}[r]^{\pi_\gamma }& Y
}
\end{displaymath}
\end{theorem}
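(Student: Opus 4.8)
\emph{Proof proposal.} The statement is the standard fact that a level-preserving morphism between two inverse systems, given on a cofinal part of the index set, induces a map of the inverse limits; the only points needing a word are the cofinality of $\{\delta:\gamma\le\delta<\omega_1\}$ in $\omega_1$ and the handling of the coordinates below $\gamma$. I read the hypothesis as asserting that for all $\gamma\le\delta\le\delta'<\omega_1$ the square with vertices $X_\delta,X_{\delta'},Y_\delta,Y_{\delta'}$ commutes, i.e. $g_\delta^{\delta'}\circ\varphi_{\delta'}=\varphi_\delta\circ f_\delta^{\delta'}$, so that $(\varphi_\delta)_{\delta\ge\gamma}$ is a morphism of the restricted systems $\{X_\alpha,f_\alpha^\beta\}_{\gamma\le\alpha<\beta<\omega_1}$ and $\{Y_\alpha,g_\alpha^\beta\}_{\gamma\le\alpha<\beta<\omega_1}$.

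First I would note that, since $[\gamma,\omega_1)$ is cofinal and upward closed in $\omega_1$, the forgetful map $X=\varprojlim\{X_\alpha\}_{\alpha<\omega_1}\to\varprojlim\{X_\alpha\}_{\gamma\le\alpha<\omega_1}$ is a homeomorphism: a thread on $[\gamma,\omega_1)$ extends uniquely to a thread on $\omega_1$ via $x_\alpha:=f_\alpha^\gamma(x_\gamma)$ for $\alpha<\gamma$, and this correspondence is continuous both ways in the product topology. The same holds for $Y$. Hence it suffices to produce a continuous map $\varphi$ between the restricted inverse limits commuting with $\pi_\gamma$, and then transport it back along these homeomorphisms.

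Next, for $x=(x_\alpha)_{\gamma\le\alpha<\omega_1}\in X$ I would set $\varphi(x):=(\varphi_\alpha(x_\alpha))_{\gamma\le\alpha<\omega_1}$. To see this is a thread of $Y$, I would check $g_\alpha^\beta(\varphi_\beta(x_\beta))=\varphi_\alpha(x_\alpha)$ for $\gamma\le\alpha<\beta$: by the morphism property the left side equals $\varphi_\alpha(f_\alpha^\beta(x_\beta))$, and $f_\alpha^\beta(x_\beta)=x_\alpha$ since $x$ is a thread. Thus $\varphi\colon X\to Y$ is well defined. For continuity I would invoke the universal property of the inverse limit (equivalently, that $Y$ carries the subspace topology of $\prod_\alpha Y_\alpha$): $\varphi$ is continuous iff $\pi_\alpha^{Y}\circ\varphi$ is continuous for each $\alpha$, and $\pi_\alpha^{Y}\circ\varphi=\varphi_\alpha\circ\pi_\alpha^{X}$ is a composite of continuous maps. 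Finally the required identity $\pi_\gamma^{Y}\circ\varphi=\varphi_\gamma\circ\pi_\gamma^{X}$ holds by the definition of $\varphi$; pushing everything back through the homeomorphisms of the second paragraph yields the induced map on the original inverse limits with the asserted commuting square (and its uniqueness, since any such map is forced coordinatewise by the $\varphi_\alpha$).

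I do not expect a genuine obstacle: the argument is a routine unwinding of the definitions of the inverse limit and the product topology. The one spot to be careful is reconciling the stated hypothesis — which literally displays only the square through level $\gamma$ — with what is actually used, namely mutual compatibility of the whole family $(\varphi_\delta)_{\delta\ge\gamma}$; in the applications of Section 3 this compatibility is built into the construction of the $\varphi_\delta$, so no additional work arises.
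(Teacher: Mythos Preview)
Your argument is correct and is the standard proof: define $\varphi$ coordinatewise, verify the thread condition via the morphism squares, and get continuity from the universal property of the inverse limit (or equivalently the subspace topology inherited from the product). Your remark about cofinality of $[\gamma,\omega_1)$ and the resulting homeomorphism of the full and restricted inverse limits is exactly the right way to handle the coordinates below $\gamma$, and your caveat about the hypothesis really meaning compatibility of the whole family $(\varphi_\delta)_{\delta\ge\gamma}$ is well taken.

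There is nothing to compare against in the paper itself: the theorem is stated with a citation to Engelking \cite{En} and is not proved in the paper. So your write-up simply supplies the (routine) proof the paper omits.
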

In \cite{MS} the second author constructed a nonmetric Hausdorff hereditarily indecomposable arc-like continuum, as an $\omega_1$-long inverse limit of metric pseudo-arc with monotone, open and closed bonding maps, since then referred to as Smith's nonmetric pseudo-arc. The following theorem lists some of the known properties of that space.
\begin{theorem}
Let $P_{\omega_1}$ be Smith's nonmetric pseudo-arc. Then 
\begin{enumerate}
	\item $P_{\omega_1}$ is homogeneous and hereditarily equivalent \cite{MS},
	\item $P_{\omega_1}$ is separable \cite{BoronskiSmith},
	\item $P_{\omega_1}$ is not first countable at any point \cite{BoronskiSmith},
	\item $P_{\omega_1}$ contains a convergent sequence \cite{BoronskiSmith}. 
\end{enumerate}
\end{theorem}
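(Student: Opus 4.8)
All four assertions are already in the literature --- (1) is the main result of \cite{MS} and (2)--(4) are established in \cite{BoronskiSmith} --- so the task is really to recall how each is read off from the defining inverse system. Write $P_{\omega_1}=\varprojlim\{P_\alpha,f_\alpha^\beta\}_{\alpha<\beta<\omega_1}$, with projections $\pi_\alpha\colon P_{\omega_1}\to P_\alpha$, where each $P_\alpha$ is a metric pseudo-arc, each successor bonding map $f_\alpha^{\alpha+1}\colon P_{\alpha+1}\to P_\alpha$ is the (monotone, open and closed) quotient map of a continuous decomposition $G_{\alpha+1}$ of $P_{\alpha+1}$ into pseudo-arcs, and $P_\lambda=\varprojlim_{\alpha<\lambda}P_\alpha$ at limits; all bonding maps and all $\pi_\alpha$ are surjective. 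The plan is to get (1) by a transfinite lift of homeomorphisms through this system, to get (3) from the observation that a countable family of basic open sets cannot distinguish points collapsed at a countable stage, and to fall back on \cite{BoronskiSmith} for the metric-flavoured statements (2) and (4).

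\emph{Homogeneity and hereditary equivalence.} Given $x,y\in P_{\omega_1}$, I would build homeomorphisms $h_\alpha\colon P_\alpha\to P_\alpha$ ($\alpha<\omega_1$) with $h_\alpha(\pi_\alpha x)=\pi_\alpha y$ and $f_\alpha^\beta\circ h_\beta=h_\alpha\circ f_\alpha^\beta$. Start from a homeomorphism $h_0$ of the metric pseudo-arc $P_0$ carrying $\pi_0 x$ to $\pi_0 y$. At a successor, Theorem~\ref{WL3} yields \emph{some} homeomorphism $\tilde h_{\alpha+1}$ of $P_{\alpha+1}$ lifting $h_\alpha$; then $\tilde h_{\alpha+1}(\pi_{\alpha+1}x)$ and $\pi_{\alpha+1}y$ lie in one element of $G_{\alpha+1}$, so Theorem~\ref{WL2} supplies a homeomorphism $g$ of $P_{\alpha+1}$ with $f_\alpha^{\alpha+1}\circ g=f_\alpha^{\alpha+1}$ taking the first point to the second, and $h_{\alpha+1}:=g\circ\tilde h_{\alpha+1}$ works. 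At limit stages and at $\omega_1$ the inverse-limit result recalled above (\cite{En}), applied to the $h_\alpha$ and separately to the $h_\alpha^{-1}$, induces a homeomorphism commuting with the projections, and since the $f_\alpha^\lambda$ separate points it must send $\pi_\lambda x$ to $\pi_\lambda y$; stage $\omega_1$ then moves $x$ to $y$. For hereditary equivalence one passes to subsystems: a nondegenerate subcontinuum $K$ has projections $\pi_\alpha(K)$ that are metric pseudo-arcs for all sufficiently large $\alpha$, each $f_\alpha^{\alpha+1}$ restricts to a monotone map between them with pseudo-arc fibres (two meeting subcontinua of a hereditarily indecomposable continuum are comparable, so an element of $G_{\alpha+1}$ meeting $\pi_{\alpha+1}(K)$ lies inside it), $K=\varprojlim\pi_\alpha(K)$, and one checks as in \cite{MS} that this again produces a copy of $P_{\omega_1}$.

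\emph{Failure of first countability.} Suppose $\{U_n\}_{n\in\N}$ were a neighbourhood base at $p\in P_{\omega_1}$. Each $U_n$ contains a basic open set $\pi_{\alpha_n}^{-1}(V_n)$ with $\pi_{\alpha_n}(p)\in V_n$; let $\beta=\sup_n\alpha_n<\omega_1$. Since $f_\beta^{\beta+1}$ has nondegenerate fibres, so does $\pi_\beta^{-1}(\pi_\beta(p))$, so pick $q\neq p$ with $\pi_\beta(q)=\pi_\beta(p)$; then $\pi_{\alpha_n}(q)=\pi_{\alpha_n}(p)\in V_n$, whence $q\in\pi_{\alpha_n}^{-1}(V_n)\subseteq U_n$ for every $n$. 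Thus $q$ lies in every member of the base, contradicting Hausdorffness. This argument uses only the uncountable length of the system and the non-injectivity of the bonding maps.

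\emph{Separability and the convergent sequence.} These are the steps I expect to carry the real weight, and the reason (2) and (4) are attributed to \cite{BoronskiSmith} rather than deduced formally: although $P_{\omega_1}$ has weight $\aleph_1$ and, by (3), uncountable character at every point, one must produce an explicit countable dense subset and --- more delicately --- a nontrivial convergent sequence, the latter being striking because hereditary equivalence forbids any nondegenerate metric subcontinuum. Both arguments exploit the fine structure of Smith's bonding maps rather than merely their being monotone, open and closed, and I would invoke \cite{BoronskiSmith} for them directly.
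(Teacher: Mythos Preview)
The paper does not prove this theorem at all: it is stated purely as a summary of known facts, with each item carrying a citation to \cite{MS} or \cite{BoronskiSmith} and no accompanying argument. Your proposal therefore already exceeds what the paper itself supplies.

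That said, your sketches are sound and track the cited sources. The homogeneity argument via transfinite lifting through Theorems~\ref{WL2} and~\ref{WL3} is exactly the mechanism used in \cite{MS} (and mirrored later in the present paper for the general $M_{\omega_1}$); the hereditary-equivalence outline is the right shape, though the delicate point --- that the restricted subsystem is again \emph{the} nonmetric pseudo-arc rather than merely some $\omega_1$-inverse limit of pseudo-arcs --- is precisely where \cite{MS} does the real work, and you correctly flag this. Your first-countability argument is the standard one and is correct as written. Deferring (2) and (4) to \cite{BoronskiSmith} is appropriate, since those arguments genuinely use more than the abstract inverse-limit description.
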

In the next section we are going to generalize Smith's construction to other metric continua. 
\section{Continuous curves of nonmetric pseudo-arcs}
\begin{theorem}
For every one-dimensional metric continuum $M$, there exists a continuum $M_{\omega_1}$ such that $M_{\omega_1}$ has a continuous decomposition into nonmetric pseudo-arcs, and the decomposition space is homeomorphic to $M$.
\end{theorem}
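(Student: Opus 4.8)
The plan is to mimic Smith's original construction of the nonmetric pseudo-arc, but starting from Lewis' continuum $\hat M$ rather than from a metric pseudo-arc. First I would invoke Theorem \ref{WL1} to obtain a one-dimensional metric continuum $\hat M$ together with a continuous decomposition $G$ into metric pseudo-arcs whose quotient $\hat M / G$ is homeomorphic to $M$; set $M_0 := \hat M$, with quotient map $q_0 : M_0 \to M$. The essential feature we need of $\hat M$ is that it is the union of a continuous family of metric pseudo-arcs; in particular it is a metric continuum, so Smith's machinery applies to it. I would then build, by transfinite recursion on $\alpha < \omega_1$, an inverse system $\{M_\alpha, f_\alpha^\beta\}$ of metric continua with monotone open and closed bonding maps, exactly as in \cite{MS}: at successor stages $M_{\alpha+1} \to M_\alpha$ is a Smith-style ``thickening'' obtained by replacing each pseudo-arc fiber of the decomposition by a continuous collection of pseudo-arcs (using Theorem \ref{WL3} fiberwise, together with the standard pseudo-arc inverse-limit crookedness conditions to keep monotonicity, openness and closedness of the bonding maps), and at limit stages $M_\lambda := \varprojlim_{\alpha<\lambda} M_\alpha$. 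Finally put $M_{\omega_1} := \varprojlim_{\alpha<\omega_1} M_\alpha$.

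The next step is to identify the decomposition. Because every bonding map $f_\alpha^\beta$ commutes (up to the natural identifications) with the projections $q_\alpha : M_\alpha \to M$ coming from composing the Smith quotient maps with $q_0$, the system $\{M_\alpha\}$ maps onto the constant system with value $M$, and by the inverse-limit functoriality stated in the unnumbered Engelking-type theorem in the excerpt we get an induced monotone surjection $Q : M_{\omega_1} \to M$. I would then define $G_{\omega_1}$ to be the decomposition of $M_{\omega_1}$ into the fibers $Q^{-1}(m)$, and argue that each such fiber is an $\omega_1$-long inverse limit of metric pseudo-arcs with monotone open closed bonding maps, hence is (a copy of) Smith's nonmetric pseudo-arc — here one uses that the fiber over $m$ at each finite-or-countable stage is itself such an inverse limit, being built fiberwise by Lewis' Theorem \ref{WL3} starting from the metric pseudo-arc $q_0^{-1}(m) \in G$. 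Continuity of the decomposition $G_{\omega_1}$ should follow from continuity of $G$ at stage $0$ together with the fact that each thickening step preserves continuity of the decomposition (a property already implicit in Lewis' and Smith's constructions) and that inverse limits of continuous decompositions with the appropriate commuting squares remain continuous.

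The main obstacle I expect is the bookkeeping needed to perform the Smith thickening ``relative to'' the decomposition coming from $M$, so that the new bonding maps are simultaneously (i) monotone, open and closed — which is what makes the $\omega_1$-long inverse limit behave and what forces the fibers to be genuine nonmetric pseudo-arcs — and (ii) fiber-preserving over $M$, so that the projection to $M$ survives to the limit. Concretely, at each successor step one must choose the crooked chains defining $M_{\alpha+1}$ to refine both the chains witnessing arc-likeness of the individual pseudo-arc fibers and an open cover of $M_\alpha$ pulled back from $M$; Lewis' results (Theorems \ref{WL1}, \ref{WL3}) give the fiberwise input and the lifting of homeomorphisms, and Smith's construction gives the open-closed-monotone bonding maps, but interleaving the two requires care. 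A secondary point requiring attention is verifying that the fibers are nondegenerate and mutually homeomorphic (so the decomposition is genuinely into nonmetric pseudo-arcs and not, say, into points over part of $M$); this follows once one checks that the thickening is applied uniformly over all of $M$. Once these are in place, the three displayed items of Theorem A are obtained by transporting Lewis' lifting theorems (Theorems \ref{WL1}, \ref{WL3}) through the inverse limit via the Engelking-type lemma, but for the present statement only the existence of the continuous decomposition with the stated quotient is required.
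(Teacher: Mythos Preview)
Your overall architecture (an $\omega_1$-long inverse system of metric continua whose $M_0$-fibers at each stage are pseudo-arcs, with the nonmetric pseudo-arc fibers appearing in the limit) matches the paper's, but you have made the successor step unnecessarily hard and in doing so have introduced a genuine gap. You propose to pass from $M_\alpha$ to $M_{\alpha+1}$ by a \emph{fiberwise} ``Smith-style thickening'': for each pseudo-arc fiber of $M_\alpha \to M$ you want to invoke Theorem~\ref{WL3} and Smith's crooked-chain machinery to replace that fiber by a pseudo-arc-of-pseudo-arcs, and then glue. But Theorem~\ref{WL3} is not a construction; it only says that \emph{given} a continuous decomposition of a pseudo-arc into pseudo-arcs, the quotient is a pseudo-arc and homeomorphisms lift. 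It does not produce $M_{\alpha+1}$, and it certainly does not tell you how to choose the fiberwise decompositions so that they vary continuously over $M$ and assemble into a single metric continuum with an open, closed, monotone map to $M_\alpha$. The ``bookkeeping'' you flag as the main obstacle --- interleaving crooked chains for the fibers with covers pulled back from $M$ --- is exactly this unresolved gluing problem, and you do not resolve it.

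The paper sidesteps this entirely by applying Theorem~\ref{WL1} \emph{globally} at every successor stage, not just once. Since each $M_\alpha$ (for $\alpha<\omega_1$) is itself a one-dimensional metric continuum, Theorem~\ref{WL1} applied to $M_\alpha$ directly yields $M_{\alpha+1}$ together with an open, closed map $p_\alpha^{\alpha+1}:M_{\alpha+1}\to M_\alpha$ whose point-inverses are pseudo-arcs; no fiberwise work or crooked-chain bookkeeping is needed. The decomposition over $M=M_0$ is then obtained simply by composing bonding maps, and the only place Theorem~\ref{WL3} enters is a posteriori, to verify inductively that the composite fibers $(p_0^\alpha)^{-1}(x)$ remain pseudo-arcs at successor stages (at limit stages one uses that a countable inverse limit of chainable hereditarily indecomposable continua is again such, hence a pseudo-arc). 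In short: start with $M_0=M$ rather than $M_0=\hat M$, iterate Theorem~\ref{WL1} on the whole space, and your ``main obstacle'' disappears.
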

\begin{proof}
Let $M_0=M$ be a one-dimensional continuum. By Lewis' decomposition theorem there is a continuum $M_1$, a collection $G_1$ of mutually disjoint pseudo-arcs in $M_1$, and an open and closed map $p_0:M_1\to M_0$ such that $x\in M$ if and only if the point-inverse $p_0^{-1}(x)\in G_1$; furthermore if $P_0: M_1/G_1 \to M_0$ is defined by $P_0(g)$ is the unique element of $M_0$ so that $p_0^{-1}(x) = g$ then $P_0$ is a homeomorphism.

Suppose that $n>1$ is an integer and $M_n, G_n, p_{n-1}, P_{n-1}$,   have been constructed.  Then, by Lewis' theorem, there is a continuum $M_{n+1}$, a collection $G_{n+1}$ of mutually disjoint pseudo-arcs in $M_{n+1}$, and an open and closed map $p_{n}:M_{n+1} \to M_n$ such that $x_n \in M_n$ if and only if the point-inverse $p_{n}^{-1}(x_n)\in G_{n+1}$ and if $P_n: M_{n+1}/G_{n+1} \to M_n$ is defined by $P_n(g)$ is the unique element of $M_n$ so that $p_n^{-1}(x) = g$ then $P_n$ is a homeomorphism.  For each pair of integers $m<n$ define $p_m^n = p_m \circ p_{m+1} \circ \ldots \circ p_{n-1}$; let $M_\omega = \varprojlim\{M_n, p_m^n\}_{m<n < \omega}$ and define $p_n^\omega$ to be the projection of $M_\omega$ onto the $n^{\mbox{th}}$ coordinate space.  Observe that if $m<n$ then $p_m^\omega = p_m^n \circ p_n^\omega$ and that as a projection the map $p_n^\omega$ is closed and open.

We continue by transfinite induction to construct $M_\alpha$ for every $\alpha<\omega_1$ as in \cite{MS}.  Suppose that $\alpha$ is an ordinal and that $M_\beta, G_\beta, p_\gamma^\beta$,   have been constructed for all $\gamma < \beta < \alpha$.

\

Case 1. $\alpha$ is a limit ordinal.  Then define:
\begin{eqnarray*}
M_\alpha & = & \varprojlim\{M_\gamma, p_\gamma^\beta\}_{\gamma<\beta<\alpha}
\end{eqnarray*}
and observe that
\begin{eqnarray*}
p_\gamma^\alpha & = & p_\gamma^\beta \circ p_\beta^\alpha
\end{eqnarray*}
where $p_\beta^\alpha: M_\alpha \to M_\beta$ is the projection map.

\

Case 2. $\alpha$ is not a limit ordinal.  Since $\alpha$ is a countable ordinal, $M_{\alpha -1}$ is a one-dimensional metric continuum.  Then by Lewis' decomposition theorem there is a continuum $M_{\alpha}$, a collection $G_{\alpha}$ of mutually disjoint pseudo-arcs in $M_{\alpha}$, and an open and closed map $p_{\alpha-1}^{\alpha} :M_{\alpha} \to M_{\alpha-1}$ such that $x\in M_{\alpha-1}$ if and only if the point-inverse ${p_{\alpha-1}^{\alpha}}^{-1}(x)\in G_{\alpha}$; furthermore if $P_{\alpha-1}: M_{\alpha}/G_{\alpha} \to M_{\alpha-1}$ is defined by $P_{\alpha -1}(g)$ is the unique element of $G_{\alpha}$ so that ${p_{\alpha-1}^\alpha}^{-1}(x) = g$ then $P_{\alpha-1}$ is a homeomorphism.
We define
\begin{eqnarray*}
p_\gamma^\alpha & = & p_\gamma^{\alpha-1} \circ p_{\alpha-1}^\alpha.
\end{eqnarray*}
Let $M_{\omega_1}=\varprojlim\{M_\alpha,p_\beta^\alpha\}_{\beta<\alpha<\omega_1}$ and let $p_\alpha^{\omega_1}$ be the natural projection onto the $\alpha$th coordinate space $M_\alpha$.  We now show that $M_{\omega_1}$ is a continuum with the desired properties.

\

\begin{claim}\label{open}
$p_0^{\omega_1}$ is open.
\end{claim}
\begin{proof}(of Claim \ref{open})
Since $p_0^{\omega_1}$ is a projection onto the coordinate space $M_0$ of the inverse limit, it is open.
\end{proof}
\begin{claim}\label{closed}
$p_0^{\omega_1}$ is closed.
\end{claim}
\begin{proof}(of Claim \ref{closed})
This follows from the fact that $M_{\omega_1}$ is a closed subset of the Cartesian product $\prod_{\gamma< \omega_1}{M_\gamma}$ and $p_0^{\omega_1}$ coincides with the projection onto the $0$th coordinate, with $\prod_{\gamma< \omega_1}{M_\gamma}$ compact.
\end{proof}
\begin{claim}\label{point}
For every $x\in M_0$ and $0 < \alpha < \omega_1$ the point-inverse $p_0^\alpha(x)$ is a pseudo-arc.
\end{claim}
\begin{proof}(of Claim \ref{point})
Fix $x\in M_0$. Then ${(p_0^1)}^{-1}(x)$ is a pseudo-arc. Suppose that $\alpha<\omega_1$ and for $\beta<\alpha$,  $p_0^\beta$ is a pseudo arc.  For $\alpha$ a limit ordinal ${(p_0^\alpha)}^{-1}(x)$ will be homeomorphic to $\varprojlim\{ {p_0^\beta}^{-1}(x), p_\gamma^\alpha\}_{0 < \gamma<\beta < \alpha}$.  But this is the countable inverse limit of hereditarily indecomposable chainable metric continua and so it must be a metric chainable hereditarily indecomposable continuum as well, and so it is the pseudo-arc by its uniqueness. For $\alpha$ a successor ordinal, ${(p_0^\alpha)}^{-1}(x)=(p_{(\alpha-1)}^\alpha)^{-1}\big( \big(p_0^{(\alpha-1)}\big)^{-1}(x)\big)$ is a continuous decomposition of the pseudo-arc $\big(p_0^{(\alpha-1)}\big)^{-1}(x)$ into
pseudo-arcs $G_\alpha = \{ \big(p_{(\alpha-1)}^\alpha \big)^{-1}(t) | t \in \big( p_0^{(\alpha-1)}\big)^{-1}(x) \}$ and so is a pseudo-arc by Theorem \ref{WL3}.
\end{proof}

Consequently $M_{\omega_1}=\lim_\leftarrow\{M_\alpha,p_\alpha^\beta:\alpha<\beta<\omega_1\}$ is a space with a natural open and closed projection map $p_0^{\omega_1}$ onto $M_0$, where for each $x\in M_0$ the point-inverse $\big(p_0^{\omega_1} \big)^{-1}(x)$ is the nonmetric pseudo-arc constructed in \cite{MS}. For each $x \in M_0$ let $g^{\omega_1}_x = \{ p \in M_{\omega_1} | p_0 = x \}$. Therefore if $G_{\omega_1} = \{g_x^{\omega_1} | x \in M_0 \}$ then $G_{\omega_1}$ is a continuous decomposition of $M_{\omega_1}$ into nonmetric pseudo-arcs whose decomposition space $M_{\omega_1}/G_{\omega_1}$ is $M_0$.
\end{proof}
It follows from Lewis' construction that if $M$ is $\mathcal{G}$-like then his ``$M$ of metric pseudo-arcs'' is $\mathcal{G}$-like as well. Because of the inverse limit construction it follows that the nonmetric $M_{\omega_{1}}$ will be $\mathcal{G}$-like as well. Moreover, since $p_0^{\omega_1}$ has acyclic fibers, by Vietoris--Begle mapping theorem \cite{Sp}, the 1st \v Cech cohomology groups of $M_{\omega_{1}}$ and $M$ will be isomorphic. An interesting consequence is the existence of a nonmetric pseudo-circle.
\begin{corollary}\label{pseudocircle}
There exists a nonmetric pseudo-circle $C_{\omega_{1}}$; i.e. a hereditarily indecomposable, nonchainable circle-like continuum such that its 1st \v Cech cohomology group is $\mathbb{Z}$. In addition, there is an open map $\pi_{\omega_1}:C_{\omega_{1}}\to C$, where $C$ is the pseudo-circle.
\end{corollary}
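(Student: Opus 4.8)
The plan is to invoke the construction of the preceding theorem with $M=C$ the metric pseudo-circle, so that $C_{\omega_1}:=M_{\omega_1}$ comes equipped with the projection $\pi_{\omega_1}:=p_0^{\omega_1}\colon C_{\omega_1}\to C$, which is open by Claim~\ref{open} (and closed by Claim~\ref{closed}) and whose point-inverses are copies of Smith's nonmetric pseudo-arc. Since the pseudo-circle is $S^1$-like, the Remark following the theorem yields that $C_{\omega_1}$ is $S^1$-like, i.e. circle-like, and, the fibers of $\pi_{\omega_1}$ being acyclic, that $\check H^1(C_{\omega_1};\mathbb{Z})\cong \check H^1(C;\mathbb{Z})=\mathbb{Z}$. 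A chainable continuum is an inverse limit of arcs and hence has trivial first \v Cech cohomology, so $C_{\omega_1}$, having $\check H^1=\mathbb{Z}\neq 0$, is not chainable. Thus everything except hereditary indecomposability is immediate from the theorem and its Remark.

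For hereditary indecomposability I would first record a general fact: an inverse limit over a well-ordered index set of hereditarily indecomposable continua with surjective bonding maps is hereditarily indecomposable. Using the characterization that a continuum is hereditarily indecomposable exactly when any two of its subcontinua that meet are nested, suppose $A,B$ are subcontinua of the inverse limit with $A\cap B\neq\emptyset$ and write $\pi_\alpha$ for the projections. For every $\alpha$ the subcontinua $\pi_\alpha(A),\pi_\alpha(B)$ of the $\alpha$-th factor meet, hence one contains the other; since the bonding maps carry inclusions of subcontinua to inclusions, a pigeonhole argument on the cofinal index set forces one of the inclusions $\pi_\alpha(A)\subseteq\pi_\alpha(B)$, $\pi_\alpha(B)\subseteq\pi_\alpha(A)$ to hold for every $\alpha$; a routine net-and-compactness argument then upgrades this to $A\subseteq B$ (resp. $B\subseteq A$).

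With this in hand, the hereditary indecomposability of each $M_\alpha$, and then of $C_{\omega_1}$, follows by transfinite induction along the construction: $M_0=C$ is hereditarily indecomposable; at a successor stage $M_\alpha$ is Lewis' curve of pseudo-arcs over the hereditarily indecomposable continuum $M_{\alpha-1}$, which is again hereditarily indecomposable (the glued-in fibers are pseudo-arcs and the crookedness of $M_{\alpha-1}$ is inherited by $M_\alpha$, as in Lewis' construction \cite{Le}); at a (countable) limit stage $M_\alpha=\varprojlim_{\gamma<\alpha}M_\gamma$ is an inverse limit of hereditarily indecomposable continua, to which the general fact applies; and finally $C_{\omega_1}=\varprojlim_{\alpha<\omega_1}M_\alpha$ is an $\omega_1$-length inverse limit of hereditarily indecomposable continua, hence hereditarily indecomposable. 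Therefore $C_{\omega_1}$ is a nonchainable, circle-like, hereditarily indecomposable continuum with $\check H^1(C_{\omega_1};\mathbb{Z})=\mathbb{Z}$ — a nonmetric pseudo-circle — and $\pi_{\omega_1}$ is the required open map onto $C$.

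The only step that is not bookkeeping is the successor case: one needs that Lewis' decomposition construction applied to a hereditarily indecomposable base produces a hereditarily indecomposable continuum. I expect this to be extracted from the details of \cite{Le}; alternatively, for a self-contained route, the same nesting argument applies to the sequential inverse system of graph-like continua that presents $\hat M$ over $M$, provided its bonding maps are chosen sufficiently crooked, which is exactly what Lewis' construction arranges.
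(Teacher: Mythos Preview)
Your proposal is correct and follows exactly the route the paper intends: the corollary is stated there without proof, as an immediate consequence of applying Theorem~3.1 with $M=C$ together with the preceding remark (circle-likeness is inherited through the inverse system, and Vietoris--Begle gives $\check H^{1}(C_{\omega_1})\cong\check H^{1}(C)=\mathbb Z$; openness of $\pi_{\omega_1}$ is Claim~\ref{open}). Your derivation of nonchainability from $\check H^{1}\neq 0$ and your inverse-limit argument for hereditary indecomposability are accurate and simply make explicit what the paper leaves to the reader; for the successor step you flag as nontrivial, note that the paper later records exactly the tool you need as Property~D in Section~5 (a subcontinuum of $\hat M$ either lies in a single fiber or is saturated), from which hereditary indecomposability of $\hat M$ over a hereditarily indecomposable base follows by the nesting criterion.
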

\begin{theorem}\label{homeolift}
If $h:M\to M$ is a homeomorphism then there exists a lift homeomorphism $\bar h:M_{\omega_1}\to M_{\omega_1}$ such that $h\circ \pi=\pi\circ \bar h$, where $\pi:M_{\omega_1}\to M$ is the open and closed quotient map.
\end{theorem}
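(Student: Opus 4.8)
The plan is to build, by transfinite induction on $\alpha<\omega_1$, a coherent tower of homeomorphisms $h_\alpha\colon M_\alpha\to M_\alpha$ with $h_0=h$ and
\[
p_\gamma^\alpha\circ h_\alpha=h_\gamma\circ p_\gamma^\alpha\qquad\text{for all }\gamma<\alpha<\omega_1 ,
\]
and then to take $\bar h$ to be the homeomorphism of $M_{\omega_1}=\varprojlim\{M_\alpha,p_\beta^\alpha\}$ induced by this tower. Since $\pi=p_0^{\omega_1}$, the case $\gamma=0$ of the displayed identity, pushed to the inverse limit, gives exactly $h\circ\pi=\pi\circ\bar h$. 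In spirit this is the transfinite version of the diagram chase behind Theorem~\ref{WL1}, threaded through the inductive construction of Section~3.

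At a successor stage $\alpha=\gamma+1$ the space $M_\alpha$, together with the decomposition $G_\alpha$ and the open and closed map $p_\gamma^\alpha\colon M_\alpha\to M_\gamma$, realizes $M_\gamma$ as the decomposition space $M_\alpha/G_\alpha$ of a continuous decomposition of $M_\alpha$ into pseudo-arcs. Lewis' lifting theorem (Theorem~\ref{WL1}) applied to the homeomorphism $h_\gamma$ of $M_\gamma$ then furnishes a homeomorphism $h_\alpha\colon M_\alpha\to M_\alpha$ with $p_\gamma^\alpha\circ h_\alpha=h_\gamma\circ p_\gamma^\alpha$; compatibility with the earlier projections is automatic, since for $\delta<\gamma$ one has $p_\delta^\alpha=p_\delta^\gamma\circ p_\gamma^\alpha$ and the inductive hypothesis gives $p_\delta^\gamma\circ h_\gamma=h_\delta\circ p_\delta^\gamma$. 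At a limit stage $\alpha$ the squares $p_\gamma^\beta\circ h_\beta=h_\gamma\circ p_\gamma^\beta$ $(\gamma<\beta<\alpha)$ commute by hypothesis, so the inverse-limit theorem of \cite{En} (equivalently, the coordinatewise formula $x\mapsto(h_\gamma(x_\gamma))_{\gamma<\alpha}$) yields an induced continuous map $h_\alpha\colon M_\alpha\to M_\alpha$ commuting with every $p_\gamma^\alpha$; running the same construction on the tower $\{h_\gamma^{-1}\}_{\gamma<\alpha}$ produces a map whose composites with $h_\alpha$ have all coordinate projections equal to the corresponding $p_\gamma^\alpha$, hence, by uniqueness of maps into an inverse limit, equal to the identity, so $h_\alpha$ is a homeomorphism. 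The final step $\alpha=\omega_1$ is the same limit argument applied to $M_{\omega_1}$ itself.

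The genuine content, as opposed to bookkeeping, lies in two places. First, Lewis' theorem only asserts the existence of \emph{some} lift, with no canonical choice, so one selects arbitrarily at each successor ordinal; the point is that such a free choice is consistent with the tower precisely because each new lift is constrained only by the single preceding bonding map, and agreement with all earlier projections is then forced by $p_\delta^\alpha=p_\delta^\gamma\circ p_\gamma^\alpha$. Second, the induced maps at limit stages and at $\omega_1$ must be shown to be homeomorphisms and not merely continuous surjections; this is the main (if mild) obstacle, and it is handled by carrying the tower of inverse homeomorphisms along simultaneously and invoking the universal property of the inverse limit, exactly as above. No metrizability or first-countability of $M_{\omega_1}$ enters the argument, so it is insensitive to the pathologies of Smith's nonmetric pseudo-arc recorded in the previous section.
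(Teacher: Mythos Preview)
Your proof is correct and follows essentially the same approach as the paper: transfinite induction using Lewis' lifting theorem (Theorem~\ref{WL1}) at successor stages and the coordinatewise induced map at limit stages, with $\bar h$ defined as the limit of the resulting tower. Your treatment is in fact slightly more careful than the paper's in that you explicitly verify, via the tower of inverses and the universal property, that the induced maps at limit ordinals are homeomorphisms rather than merely continuous.
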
 
\begin{proof}
We proceed by transfinite induction applying Lewis' result for the metric pseudo-arc to the above inverse limit construction. Let $h :M\to M$ be the given homeomorphism.  Then by Theorem 2.1 there exists a homeomorphism $h_2: M_2 \to M_2$ so that the following diagram commutes:
\begin{center}
\begin{displaymath}
\xymatrix{
M_2 \ar@{<-}[r]^{h_2 }\ar@{->}[d]^{p^2_1}&  M_2 \ar@{->}[d]^{p_1^2}\\
 M \ar@{<-}[r]^{h}& M
}
\end{displaymath}
\end{center}
Suppose that $h_\alpha$ has been constructed for all $\gamma < \alpha$ so that for $\beta < \gamma$ we have $p^\gamma_\beta \circ h_\gamma = h_\beta p^\gamma_\beta$.

Case 1:  $\alpha=\gamma+1$ is a successor ordinal.  Then by Theorem 2.1
there exists a homeomorphism $h_\alpha: M_\alpha \to M_\alpha$ so that the following diagram commutes:
\begin{center}
\begin{displaymath}
\xymatrix{
M_\alpha \ar@{<-}[r]^{h_\alpha }\ar@{->}[d]^{p^\alpha_\gamma}&  M_\alpha \ar@{->}[d]^{p^\alpha_\gamma}\\
 M_\gamma \ar@{<-}[r]^{h_\gamma}& M_\gamma
}
\end{displaymath}
\end{center}
Since for $\beta < \gamma$ we have $p^\gamma_\beta \circ h_\gamma = h_\beta p^\gamma_\beta$; then this together with the above gives us $p^\alpha_\beta \circ h_\alpha = h_\beta p^\alpha_\beta$.

Case 2: $\alpha$ is a limit ordinal.  Then by construction $M_\alpha = \varprojlim\{M_\gamma, p_\gamma^\beta\}_{\gamma<\beta<\alpha}$.  Define $h_\alpha$ coordinate-wise by $h_\alpha(\{x_\gamma\}_{\gamma < \alpha}) = \{h_\gamma(x_\gamma)\}_{\gamma < \alpha}$.  Then by the properties of inverse limits, $h_\alpha$ will be a homeomorphism and for $\gamma < \alpha$ we have $p^\alpha_\gamma \circ h_\alpha = h_\gamma p^\alpha_\gamma$.

Now we define $h_{\omega_1}$ coordinate-wise by $h_{\omega_1}(\{x_\gamma\}_{\gamma < {\omega_1}}) = \{h_\gamma(x_\gamma)\}_{\gamma < {\omega_1}}$.  Which by the inductive formula that gives us $p^\alpha_1 \circ h_\alpha = h_1 p^\alpha_1$ where $h_1 = h$ we will have $p^{\omega_1}_1 \circ h_{\omega_1} = h_{\omega_1} \circ p^{\omega_1}_1$ as required.
\end{proof}
Similarly, using Theorem \ref{WL2}, one proves the following. The proof is left to the reader. 
\begin{theorem}
If $x, y \in M_{\omega_1}$ are such that $p^{\omega_1}_1(x)=p^{\omega_1}_1(y)$ then there exists a homeomorphism $\hat h$ so that $\hat h(x) = y$ and $p^{\omega_1}_1 \circ \hat h = p^{\omega_1}_1$.
\end{theorem}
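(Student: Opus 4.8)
The plan is to run essentially the same transfinite induction as in the proof of Theorem~\ref{homeolift}, but starting from the \emph{identity} on $M_1$ instead of from a prescribed homeomorphism on the base, and correcting the Lewis lift at each successor stage by a point--moving homeomorphism supplied by Theorem~\ref{WL2}. Fix $x,y\in M_{\omega_1}$ with $p^{\omega_1}_1(x)=p^{\omega_1}_1(y)$ and abbreviate $x_\alpha=p^{\omega_1}_\alpha(x)$, $y_\alpha=p^{\omega_1}_\alpha(y)$. I would construct homeomorphisms $h_\alpha\colon M_\alpha\to M_\alpha$ for $1\le\alpha<\omega_1$ satisfying (i) $h_1=\mathrm{id}_{M_1}$; (ii) $p^\alpha_\gamma\circ h_\alpha=h_\gamma\circ p^\alpha_\gamma$ for $1\le\gamma<\alpha$; and (iii) $h_\alpha(x_\alpha)=y_\alpha$. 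Note that (i) together with (ii) for $\gamma=1$ already forces $p^\alpha_1\circ h_\alpha=p^\alpha_1$.

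For the successor step $\alpha=\gamma+1$, first apply Lewis' lifting theorem (the ``furthermore'' clause of Theorem~\ref{WL1}) to the decomposition $G_\alpha$ of $M_\alpha$ into pseudo--arcs with decomposition space $M_{\alpha-1}=M_\gamma$, obtaining a homeomorphism $h'_\alpha\colon M_\alpha\to M_\alpha$ with $p^\alpha_\gamma\circ h'_\alpha=h_\gamma\circ p^\alpha_\gamma$. Since
\[
p^\alpha_\gamma\bigl(h'_\alpha(x_\alpha)\bigr)=h_\gamma\bigl(p^\alpha_\gamma(x_\alpha)\bigr)=h_\gamma(x_\gamma)=y_\gamma=p^\alpha_\gamma(y_\alpha),
\]
the points $h'_\alpha(x_\alpha)$ and $y_\alpha$ lie in one and the same element of $G_\alpha$, so Theorem~\ref{WL2} supplies a homeomorphism $k_\alpha\colon M_\alpha\to M_\alpha$ with $k_\alpha\bigl(h'_\alpha(x_\alpha)\bigr)=y_\alpha$ and $p^\alpha_\gamma\circ k_\alpha=p^\alpha_\gamma$. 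Setting $h_\alpha=k_\alpha\circ h'_\alpha$ then preserves the commutation with $p^\alpha_\gamma$ (hence, by the inductive hypothesis, with every $p^\alpha_\beta$, $\beta<\gamma$) and achieves $h_\alpha(x_\alpha)=y_\alpha$. For a limit ordinal $\alpha$ one defines $h_\alpha$ coordinatewise, $h_\alpha(\{t_\gamma\}_{\gamma<\alpha})=\{h_\gamma(t_\gamma)\}_{\gamma<\alpha}$; compatibility (ii) at lower levels makes this a well-defined self-homeomorphism of $M_\alpha=\varprojlim\{M_\gamma,p^\beta_\gamma\}_{\gamma<\beta<\alpha}$, and (iii) is inherited coordinatewise since the $\gamma$-th coordinate of $x_\alpha$ is $x_\gamma$.

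Finally, define $\hat h\colon M_{\omega_1}\to M_{\omega_1}$ coordinatewise by $\hat h(\{t_\gamma\})=\{h_\gamma(t_\gamma)\}$; by the compatibility relations this is a well-defined homeomorphism of the inverse limit, $\hat h(x)=\{h_\gamma(x_\gamma)\}=\{y_\gamma\}=y$, and $p^{\omega_1}_1\circ\hat h=h_1\circ p^{\omega_1}_1=p^{\omega_1}_1$ because $h_1=\mathrm{id}_{M_1}$. I expect the only real subtlety to be the following: Lewis' lifting theorem produces \emph{some} lift with no control over the orbit of an individual point, so one must argue that the ``error'' $h'_\alpha(x_\alpha)$ versus $y_\alpha$ always stays within a single decomposition element and can therefore be absorbed by a homeomorphism of the type in Theorem~\ref{WL2}, which, commuting with the quotient map, is invisible to all earlier coordinates and hence does not disturb the induction. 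One should also record that the ``furthermore'' clause of Theorem~\ref{WL1} does apply to the collections $G_\alpha$ occurring here, since each $G_\alpha$ is precisely the decomposition that Lewis' theorem attaches to $M_{\alpha-1}$ in Case~2 of the construction.
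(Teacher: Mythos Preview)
Your proposal is correct and is exactly the argument the paper has in mind: the paper leaves the proof to the reader, saying only that it is proved ``similarly'' to Theorem~\ref{homeolift} ``using Theorem~\ref{WL2}'', and your transfinite induction---lifting $h_\gamma$ via Theorem~\ref{WL1} and then correcting inside the fiber by Theorem~\ref{WL2}---is precisely that. The one point you flag as the ``real subtlety'' (that $h'_\alpha(x_\alpha)$ and $y_\alpha$ land in the same element of $G_\alpha$) is disposed of by your displayed computation, so nothing is missing.
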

\begin{corollary}
If $M$ is homogeneous then so is $M_{\omega_1}$.
\end{corollary}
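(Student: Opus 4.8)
The plan is to chain together the two results immediately preceding this corollary: Theorem~\ref{homeolift}, which lifts a homeomorphism of $M$ to one of $M_{\omega_1}$, and the (unlabelled) fibrewise homogeneity theorem stated just before this corollary, itself a consequence of Lewis' Theorem~\ref{WL2}, which moves points freely inside a single decomposition element. Homogeneity of $M_{\omega_1}$ will follow by a two-step argument: first carry the pseudo-arc fibre containing a given point $x$ onto the fibre containing the target point $y$, then slide the image of $x$ onto $y$ within that common fibre.

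Concretely, I would fix arbitrary $x,y\in M_{\omega_1}$ and set $a=\pi(x)$, $b=\pi(y)$, where $\pi:M_{\omega_1}\to M$ is the open and closed quotient map onto the decomposition space $M_{\omega_1}/G_{\omega_1}\cong M$ (realized by the projection of the inverse system onto the $M$-coordinate). Since $M$ is homogeneous, pick a homeomorphism $h:M\to M$ with $h(a)=b$, and apply Theorem~\ref{homeolift} to obtain a homeomorphism $\bar h:M_{\omega_1}\to M_{\omega_1}$ with $\pi\circ\bar h=h\circ\pi$. Then $\pi(\bar h(x))=h(\pi(x))=h(a)=b=\pi(y)$, so $\bar h(x)$ and $y$ belong to the same element of $G_{\omega_1}$.

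Next I would invoke the fibrewise theorem for the pair $\bar h(x),y$, which have equal image under the projection to $M$: there is a homeomorphism $\hat h:M_{\omega_1}\to M_{\omega_1}$ with $\hat h(\bar h(x))=y$ and $\pi\circ\hat h=\pi$. The composite $\hat h\circ\bar h$ is then a homeomorphism of $M_{\omega_1}$ taking $x$ to $y$, so $M_{\omega_1}$ is homogeneous.

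I do not expect a genuine obstacle here: all the substantive work — Lewis' lifting theorems for the metric pseudo-arc and the transfinite induction propagating them through the $\omega_1$-tower — has already been carried out in Theorems~\ref{homeolift} and its successor. The only point demanding attention is notational bookkeeping: one must confirm that the quotient map $\pi$ of this corollary is literally the projection onto the $M$-coordinate used in Theorem~\ref{homeolift} and in the fibrewise theorem, so that ``same fibre of $\pi$'', ``same element of $G_{\omega_1}$'', and ``equal value of that projection'' are the same condition; and one relies on Claim~\ref{point} to know each such fibre is the nonmetric pseudo-arc of \cite{MS}, on which the fibrewise theorem supplies the homeomorphisms needed for the second step.
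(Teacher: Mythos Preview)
Your argument is correct and is exactly the two-step combination of Theorem~\ref{homeolift} with the fibrewise homogeneity theorem that the paper intends; the paper itself states the corollary without proof precisely because it follows immediately from those two results in the way you describe.
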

\begin{corollary}
There exists a homogeneous circle of nonmetric pseudo-arcs, and homogeneous solenoids of nonmetric pseudo-arcs.
\end{corollary}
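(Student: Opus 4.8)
The plan is to apply the preceding corollary (``if $M$ is homogeneous then so is $M_{\omega_1}$'') to two classical families of homogeneous one-dimensional metric continua, the circle and the Vietoris solenoids, so that essentially no new argument is needed beyond recalling that these spaces are homogeneous.

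First I would take $M=S^1$, the circle. It is a one-dimensional metric continuum, and it is homogeneous, being a compact topological group. Feeding $M=S^1$ into the construction of Theorem~3.1 produces a continuum $M_{\omega_1}$ together with a continuous decomposition $G_{\omega_1}$ into nonmetric pseudo-arcs whose decomposition space $M_{\omega_1}/G_{\omega_1}$ is $S^1$, and by the preceding corollary $M_{\omega_1}$ is homogeneous; this is the asserted homogeneous circle of nonmetric pseudo-arcs. As recorded in the remark following that construction, since $S^1$ is $\mathcal{G}$-like with $\mathcal{G}$ a single loop the continuum $M_{\omega_1}$ is circle-like, and since $p_0^{\omega_1}$ has pseudo-arc (hence acyclic) fibers the Vietoris--Begle theorem \cite{Sp} gives $\check H^1(M_{\omega_1})\cong\check H^1(S^1)\cong\Z$; in particular $M_{\omega_1}$ is not Smith's nonmetric pseudo-arc, and, being non-metrizable, it is not the classical Bing--Jones circle of pseudo-arcs either.

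Next I would take, for any sequence $\mathbf p=(p_1,p_2,\dots)$ of integers with each $p_i\ge 2$, the Vietoris solenoid $\Sigma_{\mathbf p}=\varprojlim\{S^1,\ z\mapsto z^{p_i}\}$. Each $\Sigma_{\mathbf p}$ is a one-dimensional metric continuum, and it is homogeneous, being a compact topological group. Applying the construction of Theorem~3.1 to $M=\Sigma_{\mathbf p}$ and then invoking the preceding corollary yields a homogeneous continuum $M_{\omega_1}$ carrying a continuous decomposition into nonmetric pseudo-arcs whose decomposition space is $\Sigma_{\mathbf p}$; these are the asserted homogeneous solenoids of nonmetric pseudo-arcs. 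By the same Vietoris--Begle argument $\check H^1(M_{\omega_1})\cong\check H^1(\Sigma_{\mathbf p})$, the corresponding subgroup of $\mathbb{Q}$, so different choices of $\mathbf p$ give topologically distinct examples, none metrizable; moreover, since these decomposition spaces are non-planar, this produces homogeneous one-dimensional continua lying outside the Hoehn--Oversteegen planar list \cite{HoOv}.

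I do not expect a genuine obstacle here. All the content sits in Theorem~3.1, the homeomorphism-lifting result Theorem~\ref{homeolift}, and the preceding corollary; the only external facts used are the homogeneity of the circle and of the Vietoris solenoids (classical, the latter because solenoids are compact connected topological groups) together with the Vietoris--Begle theorem already invoked above. Thus, once the two families of input spaces are identified, the proof reduces to a two-line application, and the only mild care needed is in recording, via the \v Cech cohomology groups and $\mathcal{G}$-likeness, that the resulting spaces are genuinely new.
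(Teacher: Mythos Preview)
Your proposal is correct and matches the paper's intended approach: the corollary is stated without proof because it follows immediately from applying the preceding corollary to $M=S^1$ and to the Vietoris solenoids, exactly as you do. The extra remarks on $\mathcal{G}$-likeness and \v Cech cohomology are not needed for the bare statement, but they are consistent with the observations the paper makes after Theorem~3.1.
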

\begin{corollary}
The nonmetric pseudo-circle from Corollary \ref{pseudocircle} is nearly homogeneous. 
\end{corollary}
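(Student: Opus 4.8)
The plan is to prove the slightly more general statement that $M_{\omega_1}$ is nearly homogeneous whenever the one-dimensional metric continuum $M$ is, and then to specialize to $M=C$, the metric pseudo-circle, which is nearly homogeneous by \cite{KeRo}; by Corollary \ref{pseudocircle} the nonmetric pseudo-circle is precisely the associated continuum $C_{\omega_1}$, equipped with the open and closed quotient map $\pi=p_0^{\omega_1}\colon C_{\omega_1}\to C$ whose point-inverses are the elements of the continuous decomposition $G_{\omega_1}$ into nonmetric pseudo-arcs.

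First I would isolate a fiber homogeneity statement: any two points of $M_{\omega_1}$ lying in a common element of $G_{\omega_1}$ can be interchanged by a homeomorphism of $M_{\omega_1}$. This is proved by the same reasoning that gives homogeneity of $M_{\omega_1}$ from homogeneity of $M$. If $p_0^{\omega_1}(x')=p_0^{\omega_1}(z)$, then $p_1^{\omega_1}(x')$ and $p_1^{\omega_1}(z)$ lie in a common element of the decomposition $G_1$ of $M_1$ (since $p_0^{\omega_1}=p_0^1\circ p_1^{\omega_1}$), so Theorem \ref{WL2} provides a homeomorphism $h_1\colon M_1\to M_1$ with $h_1(p_1^{\omega_1}(x'))=p_1^{\omega_1}(z)$ and $p_0^1\circ h_1=p_0^1$; lifting $h_1$ through the tower $M_1\leftarrow M_2\leftarrow\cdots\leftarrow M_{\omega_1}$ exactly as in the proof of Theorem \ref{homeolift}, but started at level $1$, I obtain a homeomorphism of $M_{\omega_1}$ that fixes every $p_0^{\omega_1}$-fiber and sends $x'$ into the $p_1^{\omega_1}$-fiber of $z$. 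A final application of the point-interchange theorem for $p_1^{\omega_1}$-fibers stated right after Theorem \ref{homeolift} then carries that image to $z$ itself.

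Granting this, the main argument is short. Fix $x,y\in M_{\omega_1}$ and an open set $U\ni y$. Since $\pi$ is open, $V:=\pi(U)$ is open in $M$ and contains $\pi(y)$. Because $M$ is nearly homogeneous there is a homeomorphism $g\colon M\to M$ with $g(\pi(x))\in V$; by Theorem \ref{homeolift} lift it to $\bar g\colon M_{\omega_1}\to M_{\omega_1}$ with $\pi\circ\bar g=g\circ\pi$. Then $\pi(\bar g(x))=g(\pi(x))\in V=\pi(U)$, so some $z\in U$ satisfies $\pi(z)=\pi(\bar g(x))$, i.e. $\bar g(x)$ and $z$ lie in a common element of $G_{\omega_1}$. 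By the fiber homogeneity statement there is a homeomorphism $\phi\colon M_{\omega_1}\to M_{\omega_1}$ with $\phi(\bar g(x))=z$, and then $h:=\phi\circ\bar g$ is a homeomorphism of $M_{\omega_1}$ with $h(x)=z\in U$. Hence $M_{\omega_1}$, and in particular $C_{\omega_1}$, is nearly homogeneous.

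The step I expect to need the most care is the fiber homogeneity statement. Although it is essentially the content of the already established fact that $M_{\omega_1}$ is homogeneous when $M$ is, one must check that the homeomorphism of $M_1$ furnished by Theorem \ref{WL2} really lifts all the way to $M_{\omega_1}$ while preserving $p_0^{\omega_1}$-fibers, i.e. that the transfinite induction in the proof of Theorem \ref{homeolift} runs verbatim when started at level $1$ instead of level $0$. Everything else is a diagram chase using only the openness of the projection $\pi$ and near homogeneity of the base $M$.
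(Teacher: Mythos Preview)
Your proposal is correct and follows the approach the paper has in mind: the paper states this corollary without proof, treating it as an immediate consequence of near homogeneity of the metric pseudo-circle \cite{KeRo}, the homeomorphism-lifting Theorem~\ref{homeolift}, and the fiber-interchange theorem stated right after it. Your write-up simply makes this explicit, including the extra care about reducing $p_0^{\omega_1}$-fiber homogeneity to the stated $p_1^{\omega_1}$-fiber statement; this is exactly the intended argument.
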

The following corollary can be proved following the arguments presented here, combined with those in the proof of Lemma 4.1 in \cite{Bo}.
\begin{corollary}
For every $n>0$ there exists a $\frac{1}{n}$-homogeneous solenoidal continuum of nonmetric pseudo-arcs.
\end{corollary}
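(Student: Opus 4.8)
The plan is to run the construction of this section over a metric continuum that is already only $\frac{1}{n}$-homogeneous, and to check that passing to the nonmetric thickening alters neither the solenoidal character nor the number of homeomorphism orbits. I would first import the base from \cite[Lemma~4.1]{Bo} (or its proof): for every integer $n>0$ there is a one-dimensional metric solenoidal continuum $S_n$ whose homeomorphism group has exactly $n$ orbits and all of whose nondegenerate proper subcontinua are arcs; in particular $S_n$ is circle-like and is not itself a continuum of pseudo-arcs. Applying the construction of this section with $M=S_n$ yields a continuum $M_{\omega_1}$, an open and closed quotient map $\pi\colon M_{\omega_1}\to S_n$ with nonmetric pseudo-arc point-inverses, and the continuous decomposition $G_{\omega_1}=\{g^{\omega_1}_x:x\in S_n\}$ with $M_{\omega_1}/G_{\omega_1}\cong S_n$. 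By the remarks following the decomposition theorem of this section, $M_{\omega_1}$ is again circle-like and $\check H^1(M_{\omega_1})\cong\check H^1(S_n)$, so $M_{\omega_1}$ is again solenoidal, and by construction it is a continuum of nonmetric pseudo-arcs. It then remains to prove that the homeomorphism group of $M_{\omega_1}$ has exactly $n$ orbits.

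For the bound ``at most $n$'', suppose $x,y\in M_{\omega_1}$ with $\pi(x),\pi(y)$ in a common orbit of $S_n$; choose $h\colon S_n\to S_n$ with $h(\pi(x))=\pi(y)$, lift it via Theorem~\ref{homeolift} to $\bar h\colon M_{\omega_1}\to M_{\omega_1}$ with $\pi\circ\bar h=h\circ\pi$, so that $\bar h(x)$ and $y$ lie in the single fiber $g^{\omega_1}_{\pi(y)}$, and then, by the fiber point-transitivity theorem proved after Theorem~\ref{homeolift} (the lift of Lewis' Theorem~\ref{WL2}), move $\bar h(x)$ to $y$ by a homeomorphism of $M_{\omega_1}$ that preserves $\pi$. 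Composing, $x$ and $y$ lie in one orbit of $M_{\omega_1}$, so $M_{\omega_1}$ has no more orbits than $S_n$.

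For the bound ``at least $n$'', it suffices to show that every self-homeomorphism of $M_{\omega_1}$ permutes the members of $G_{\omega_1}$: it then descends to a homeomorphism of $M_{\omega_1}/G_{\omega_1}=S_n$ and so cannot carry a point over one orbit of $S_n$ to a point over another. To see that $G_{\omega_1}$ is topologically canonical I would adapt the argument of \cite[Lemma~4.1]{Bo} to the long inverse limit, identifying the members of $G_{\omega_1}$ intrinsically as the maximal hereditarily indecomposable subcontinua of $M_{\omega_1}$: each $g^{\omega_1}_x$ is a nonmetric pseudo-arc, hence hereditarily indecomposable; conversely, if $A\subseteq M_{\omega_1}$ is hereditarily indecomposable and meets two fibers, then $\pi(A)$ is a nondegenerate subcontinuum of $S_n$, hence an arc, and $A$, being a proper subcontinuum of the circle-like $M_{\omega_1}$, is arc-like, hence a (possibly nonmetric) pseudo-arc; one then rules out such an $A$ exactly as in \cite{Bo}, using the structure of the bonding maps $p^{\beta}_{\alpha}$, so that $A$ in fact lies in a single fiber. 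The main obstacle is precisely this last step --- transplanting the intrinsic description of the decomposition from \cite[Lemma~4.1]{Bo} through the $\omega_1$-long inverse limit and ruling out hereditarily indecomposable subcontinua that spread across more than one fiber, which is where the choice of a base $S_n$ with only arc proper subcontinua is essential; granting it, the maximal hereditarily indecomposable subcontinua of $M_{\omega_1}$ are exactly the members of $G_{\omega_1}$, this family is preserved by every homeomorphism, and combining the two bounds shows that $M_{\omega_1}$ is a $\frac{1}{n}$-homogeneous solenoidal continuum of nonmetric pseudo-arcs. The remaining ingredients --- lifting of homeomorphisms, fiber point-transitivity, and preservation of circle-likeness and of the first \v{C}ech cohomology --- are already supplied by the results of this section.
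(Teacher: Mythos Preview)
Your plan matches the paper's own: start from the $\frac{1}{n}$-homogeneous solenoidal $S_n$ of \cite{RiPP}, invoke the argument of \cite[Lemma~4.1]{Bo}, and pass to the long inverse limit; the paper leaves precisely the orbit-count verification you outline as an exercise for the reader. The step you flag as the main obstacle is in fact immediate and needs no inspection of the bonding maps: $\pi$ has connected (nonmetric pseudo-arc) fibers and is therefore monotone, so a hereditarily indecomposable $A\subseteq M_{\omega_1}$ would have hereditarily indecomposable (or degenerate) image $\pi(A)$, whereas every nondegenerate subcontinuum of $S_n$ contains an arc and hence fails to be hereditarily indecomposable---so $\pi(A)$ is a point, $A$ lies in a single fiber, and the decomposition $G_{\omega_1}$ is canonical.
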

\begin{proof}
Fix $n>0$ and let $S_n$ be a hereditarily decomposable $\frac{1}{n}$-homogeneous solenoidal continuum constructed in \cite{RiPP}. Then by Lemma 4.1 in \cite{Bo} there exists a $\frac{1}{n}$-homogeneous solenoidal continuum of pseudo-arcs. Now proceed to the long inverse limit. The details are left to the reader as an exercise.
\end{proof}

\begin{theorem}
Suppose $X$ is a metric chainable continuum and $f: X\to X$ is a map. Then there exists a homeomorphism of the nonmetric pseudo-arc $h:P_{\omega_1}\to P_{\omega_1}$ and a continuous surjection $p:P_{\omega_1}\to X$ such that $f \circ p=p\circ h$.
\end{theorem}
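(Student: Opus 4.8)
The plan is to reduce to the metric case via Lewis' semi-conjugacy theorem and then push the data up to the $\omega_1$ level. First I would apply Theorem~\ref{semiconjugacy} to the metric chainable continuum $X$ and the map $f\colon X\to X$: this produces a metric pseudo-arc $P$, a homeomorphism $\varphi\colon P\to P$, and a continuous surjection $q\colon P\to X$ with $f\circ q=q\circ\varphi$. The task is then to replace $\varphi$ by a homeomorphism of the \emph{nonmetric} pseudo-arc while retaining a semi-conjugacy onto $f$.

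Next I would run the long inverse limit construction from the first theorem of this section with $M_0=P$ the metric pseudo-arc, making the decomposition choices once and for all, independently of $\varphi$. Since $P$ is itself a pseudo-arc, at each successor step Lewis' decomposition theorem produces a continuous collection of pseudo-arcs filling a pseudo-arc, so by Theorem~\ref{WL3} the new stage is again a (metric) pseudo-arc, while at limit steps a countable inverse limit of metric pseudo-arcs is again the pseudo-arc by its uniqueness; this is precisely the verification of Claim~\ref{point}. Hence $M_{\omega_1}$ is an $\omega_1$-long inverse limit of metric pseudo-arcs with monotone, open and closed bonding maps, i.e.\ (a copy of) Smith's nonmetric pseudo-arc $P_{\omega_1}$ of \cite{MS}. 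Write $\pi=p_0^{\omega_1}\colon P_{\omega_1}\to P$ for the natural projection, which is open and closed by Claims~\ref{open} and~\ref{closed} and surjective because each of its fibers is a nonempty nonmetric pseudo-arc.

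Finally I would apply Theorem~\ref{homeolift} to the homeomorphism $\varphi\colon P\to P$, obtaining a lift homeomorphism $h\colon P_{\omega_1}\to P_{\omega_1}$ with $\varphi\circ\pi=\pi\circ h$. Setting $p=q\circ\pi\colon P_{\omega_1}\to X$, which is a continuous surjection as a composition of continuous surjections, we get
$$f\circ p=f\circ q\circ\pi=q\circ\varphi\circ\pi=q\circ\pi\circ h=p\circ h,$$
so $p$ realizes the required semi-conjugacy between $f$ and $h$. The argument is essentially bookkeeping on top of the machinery already assembled; the only step that deserves a line of care is the identification of the construction started from the metric pseudo-arc with Smith's nonmetric pseudo-arc, which is immediate since this instance of the construction reproduces verbatim the inverse limit of \cite{MS}.
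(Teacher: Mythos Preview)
Your proposal is correct and follows essentially the same route as the paper: apply Lewis' metric semi-conjugacy theorem to obtain $\varphi\colon P\to P$ and $q\colon P\to X$, run the long inverse limit construction with $M_0=P$ to get $P_{\omega_1}$, lift $\varphi$ via Theorem~\ref{homeolift}, and set $p=q\circ\pi$. The paper's proof is terser and leaves the final composition and the identification with Smith's pseudo-arc implicit, but the underlying argument is identical.
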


\begin{proof}
Let $M$ be the pseudo-arc $P$, then applying Theorem \ref{semiconjugacy} there exists a homeomorphism  $h:M \to M$ and a continuous surjection $q:M \to X$ such that $f \circ q=q\circ h$.  Then from Theorem \ref{homeolift} there is a homeomorphism $h_{\omega_1} :M_{\omega_1}\to M_{\omega_1}$ such that $h_{\omega_1}\circ p_1^{\omega_1}=p_1^{\omega_1} \circ h_{\omega_1}$.  It is straightforward to verify that $h_{\omega_1}$ will be the required map.
\end{proof}
\section{More on decompositions of $M_{\omega_1}$}
In this section we are interested if the procedure described in the previous section could be extended beyond $\omega_1$, as to produce other nonmetric pseudo-arcs and corresponding new decompositions. As we shall show, this is unfortunately not possible by the same approach, as any decomposition space of $M_{\omega_1}$ must be a metric continuum.
\begin{theorem} Let $M_{\omega_1}=\varprojlim\{M_\alpha, f_\alpha^\beta\}_{\alpha<\beta<\omega_1}$ be the non-metric one-dimensional continuum constructed above.  Suppose that $G$ is a continuous collection of nondegenerate continua that fills up $M_{\omega_1}$.  Then $M_{\omega_1}/G$ is a one-dimensional metric continuum.
\end{theorem}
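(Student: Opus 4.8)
The strategy is to reduce the claim to metrizability of $M_{\omega_1}/G$ and then to show that $G$ is, in effect, already visible at a countable stage of the inverse system. The key positive tool is a factoring lemma: every continuous map of $M_{\omega_1}$ into a metric compactum factors through some $M_\alpha$ with $\alpha<\omega_1$. To prove it, for each $\alpha$ let $A_\alpha=\{f\circ p_\alpha^{\omega_1}:f\in C(M_\alpha)\}\subseteq C(M_{\omega_1})$; since $p_\alpha^{\omega_1}$ is a continuous surjection, $f\mapsto f\circ p_\alpha^{\omega_1}$ is an isometry of $C(M_\alpha)$ onto the closed subalgebra $A_\alpha$. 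The $A_\alpha$ increase with $\alpha$, their union is a subalgebra containing the constants that separates the points of $M_{\omega_1}$ (distinct threads disagree in some coordinate), and — this is where the length $\omega_1$ is essential — since $\omega_1$ is regular, a uniformly Cauchy sequence drawn from $\bigcup_{\alpha<\omega_1}A_\alpha$ lies entirely in a single $A_\beta$ (let $\beta$ be the countable supremum of the indices occurring), so $\bigcup_{\alpha<\omega_1}A_\alpha$ is uniformly closed. By Stone–Weierstrass it equals $C(M_{\omega_1})$, which is exactly the factoring statement for real-valued maps; the general case follows by composing with an embedding of the metric target into the Hilbert cube.

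Next I would record $G$ in these terms. Write $q\colon M_{\omega_1}\to M_{\omega_1}/G$ for the open, closed decomposition map and $C_G=\{f\in C(M_{\omega_1}):f\text{ is constant on each }g\in G\}$, a closed subalgebra isometric to $C(M_{\omega_1}/G)$ via $f\leftrightarrow\bar f$, $f=\bar f\circ q$. For $\alpha<\omega_1$ let $\sim_\alpha$ be the closed equivalence relation on $M_\alpha$ that identifies $u$ and $v$ exactly when $\hat f(u)=\hat f(v)$ for every $f\in C_G\cap A_\alpha$ (with $f=\hat f\circ p_\alpha^{\omega_1}$); put $N_\alpha=M_\alpha/\!\sim_\alpha$, a metric continuum, and let $\rho_\alpha^\beta\colon N_\beta\to N_\alpha$ be the induced surjections. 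Using $C_G=\bigcup_{\alpha<\omega_1}(C_G\cap A_\alpha)$ (from the factoring lemma) one verifies that the compatible family $q_\alpha\circ p_\alpha^{\omega_1}$, with $q_\alpha\colon M_\alpha\to N_\alpha$, induces a homeomorphism $M_{\omega_1}/G\cong\varprojlim\{N_\alpha,\rho_\alpha^\beta\}_{\alpha<\beta<\omega_1}$ — the sort of induced map furnished by the inverse-limit theorem recalled in Section 2. So it is enough to prove that this $\omega_1$-indexed system stabilizes, i.e. that $\rho_\alpha^{\alpha+1}$ is a homeomorphism for all $\alpha$ beyond some $\alpha_0<\omega_1$. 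Granting that, $M_{\omega_1}/G\cong N_{\alpha_0}$, a metric continuum; moreover $q$ then factors as $\psi\circ p_{\alpha_0}^{\omega_1}$ with $\psi\colon M_{\alpha_0}\to M_{\omega_1}/G$ an open monotone surjection (its point-preimages are the connected sets $p_{\alpha_0}^{\omega_1}(g)$), whence $M_{\omega_1}/G$ is a metric continuum of dimension one, monotone images of the one-dimensional metric continuum $M_{\alpha_0}$ being one-dimensional.

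The crux — and the step I expect to be hardest — is the stabilization, and here the internal structure of $M_{\omega_1}$ must be used. Each successor step is, by Lewis' decomposition theorem, an ``$M_\alpha$ of pseudo-arcs'': the fibres of $p_\alpha^{\alpha+1}$ are pseudo-arcs, hence indecomposable and hereditarily equivalent, and all bonding maps are monotone, open and closed. For $g\in G$ and a fibre $F$ of some $p_\beta^{\omega_1}$, the traces $g\cap F$ (over all $g$ meeting $F$) partition the nonmetric pseudo-arc $F$ into closed sets, the partition being upper semicontinuous; one shows these traces cannot be ``transverse'' to $F$ — a member of $G$ meeting every vertical fibre in at most one point would be a metric nondegenerate subcontinuum of $M_{\omega_1}$, and one verifies, using hereditary equivalence of the pseudo-arc layers, that no such subcontinuum exists. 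Feeding this, together with Theorem \ref{WL3} applied to the metric pseudo-arc fibres of the maps $p_\alpha^{\alpha+1}$, up the tower, one argues that $\rho_\alpha^{\alpha+1}$ can fail to be injective only at stages where $G$ genuinely refines the $\alpha$-th pseudo-arc layer, and that each such refinement contributes new data of a metric (``pseudo-arc'') character that then persists at all later stages; since $M_{\omega_1}$ is separable \cite{BoronskiSmith}, only countably much such data can appear, so the set of these stages is countable and hence bounded in $\omega_1$. Making the quoted clauses precise — above all the absence of transverse subcontinua and the bookkeeping that turns ``new pseudo-arc data'' into a countable bound — is the essential content; everything else is routine inverse-limit manipulation.
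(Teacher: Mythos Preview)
Your factoring lemma and the reformulation of $M_{\omega_1}/G$ as $\varprojlim N_\alpha$ are correct and pleasant, but they only relocate the difficulty: everything hinges on the stabilization of the $N_\alpha$, and your sketch for that step has a genuine gap. The closing appeal to separability of $M_{\omega_1}$ is not justified. You have not said precisely what the ``new pseudo-arc data'' is, why each refinement stage produces a distinct such datum, or why separability of the ambient space --- a statement about points --- bounds the number of ordinals $\alpha$ at which $\rho_\alpha^{\alpha+1}$ fails to be injective. No mechanism is given that converts a countable dense set into a countable bound on refinement stages, and I do not see one.

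The paper's argument avoids this entirely by working directly in the hyperspace. For each nondegenerate subcontinuum $H\subseteq M_{\omega_1}$ let $\alpha_H$ be the least ordinal with $\pi_{\alpha_H}(H)$ nondegenerate; one checks $\alpha_H$ is a successor and that $\pi_{\alpha_H}(H)$ lies inside a single pseudo-arc fiber of $p_{\alpha_H-1}^{\alpha_H}$. The key structural fact (this is the precise form of your ``no transverse subcontinua'') is that irreducibility of the pseudo-arc forces $\pi_{\alpha_H+1}(H)=\bigl(p_{\alpha_H}^{\alpha_H+1}\bigr)^{-1}\bigl(\pi_{\alpha_H}(H)\bigr)$, and hence $H=\pi_\gamma^{-1}\bigl(\pi_\gamma(H)\bigr)$ for every $\gamma>\alpha_H$. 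The uniform bound then comes from \emph{compactness of $G$ in the Vietoris topology}, not from separability: the set $\{K\in C(M_{\omega_1}):\alpha_K\ge\alpha_H+2\}$ is closed in $C(M_{\omega_1})$ and misses $H$, so $H\mapsto\alpha_H$ is locally bounded on $G$; since a continuous decomposition of a compactum is compact in $C(M_{\omega_1})$, finitely many such neighborhoods cover $G$, yielding a single $\lambda<\omega_1$ with $\alpha_H<\lambda$ for all $H\in G$. The collection $J=\{\pi_\lambda(H):H\in G\}$ is then a continuous decomposition of the metric continuum $M_\lambda$, and $M_{\omega_1}/G\cong M_\lambda/J$.

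So your Stone--Weierstrass apparatus, while correct, is not doing the real work; what you actually need is the saturation statement $H=\pi_\gamma^{-1}(\pi_\gamma(H))$ for $\gamma>\alpha_H$ together with the hyperspace-compactness bound, and once you have those the conclusion is immediate without any appeal to separability.
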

\begin{proof}
Recall: $M_0$ is an arbitrary one dimensional metric continuum.  For $\alpha > 0$, $\alpha$ a successor ordinal,
$G_\alpha = \{ f_{\alpha-1}^{-1}(p) | \ p \in X_{\alpha-1} \}$ is the continuous decomposition of $M_{\alpha-1}$ used to construct $M_\alpha$.
We state and prove some claims concerning a single element $H \in G$.
\begin{claim}\label{alpha} Let $H \in G$ and suppose that $\alpha$ is the first coordinate so that $\pi_\alpha(H)$ is non-degenerate.  Then $\alpha$ is not a limit ordinal.
\end{claim}
\begin{proof}(of Claim \ref{alpha})
If $\alpha$ is a limit ordinal then $M_\alpha$ is the inverse limit of the spaces $\{M_\gamma\}_{\gamma < \alpha}$.  Then if $\pi_\alpha(H)$ is non-degenerate, there must be a $\gamma < \alpha$ so that $f_\gamma^{\alpha}(\pi_\alpha(H))$ is non-degenerate which contradicts the hypothesis.
\end{proof}
Suppose that $\alpha$ is defined as in Claim 1; we are interested in the case where $\alpha>0$.  So suppose that $\pi_\alpha(H)$ is a subset of $f_{\alpha-1}^{-1}(p)$ for some point $p \in X_{\alpha - 1}$.  Then for $\gamma < \alpha$, $\pi_\gamma(H)$ is, by definition, a singleton.  For claims 2 and 3 suppose that $H \in G$ and $\alpha$ is defined as in claim 1.
\begin{claim}\label{pi} $\pi_\alpha(H)$ lies in a single element of $G_\alpha$.\end{claim}
\begin{proof}(of Claim \ref{pi}) If $\pi_\alpha(H)$ intersects two element of $G_\alpha$ then $\pi_{\alpha-1}(H)$ is non-degenerate; this contradicts the definition of $\alpha$.  \end{proof}
Suppose now that $x = \{x_\gamma\}_{\gamma < \omega_1} \in H$ and consider an arbitrary $\gamma$. Then for $\gamma < \alpha$ we have $x_\gamma = \pi_\gamma(H)$ since $\pi_\gamma(H)$ is degenerate in this case. For $\gamma = \alpha$, $\gamma$ is not a limit ordinal so $x_\gamma \in f_{\gamma-1}^{-1}(x_{\gamma -1})$; though it is possible that there is some point $y \in M$ with $y_\gamma \in f_{\gamma-1}^{-1}(x_{\gamma -1})$ so that $y \notin H$.  Suppose that $\gamma = \alpha+1$.  Then since $\alpha>0$, by construction $\pi_\alpha(H)$ is a pseudo-arc and there are two points $x_\alpha$ and $y_\alpha$ so that $\pi_\alpha(H)$ is irreducible from $x_\alpha$ to $y_\alpha$.  From the properties of continuous decompositions of the pseudo-arc, we also have the fact that $f_\alpha^{-1}(\pi_\alpha(H))$ is irreducible between each point of $f_\alpha^{-1}(x_\alpha)$ and each point $f_\alpha^{-1}(y_\alpha)$. So $\pi_\gamma(H) = \pi_{\alpha + 1}(H) = f_\alpha^{-1}(\pi_\alpha(H))$.  Therefore, since $\pi_{\alpha + 1}(H)$ is a union of elements of $G_{\alpha+1}$ for $\gamma > \alpha +1$ we have $\pi_\gamma(H) = {f_{\alpha+1}^\gamma}^{-1}(\pi_{\alpha+1}(H))$.  So this gives us:
\begin{claim}\label{H} $H=\{ \{x_\gamma\}_{\gamma < \omega_1} \ | \ x_\alpha \in \pi_\alpha(H)\}$.\end{claim}
For each non-degenerate subcontinuum $H$ of $M_{\omega_1}$ let $\alpha_H$ denote the first ordinal $\alpha$ such that $\pi_\alpha(H)$ is non-degenerate.  Then the following follows from the discussion preceding Claim \ref{H}:
\begin{claim} If $H\in G$ is a non-degenerate subcontinuum of $M_{\omega_1}$ then for each $\gamma > \alpha_H$ we have $H = \pi_\gamma^{-1} (\pi_\gamma(H))$.\end{claim}
\begin{claim}\label{K} Suppose $H \subset M_{\omega_1}$ is a non-degenerate continuum.  Then there is an open set $U$ in $C(M_{\omega_1})$ containing $H$ so that if $K \in U$ then $\alpha_K \le \alpha_H + 2$.\end{claim}
\begin{proof}(of Claim \ref{K})  Observe that the following collection is closed set in the hyperspace $C(M_{\omega_1})$ that misses $H$:
$$\{K \in C(M_{\omega_1}) | \alpha_K \ge \alpha_H + 2 \}.$$
\end{proof}
From Claim \ref{K} and the compactness of $M_{\omega_1}$ we have the following:
\begin{claim}\label{6}  There exists an ordinal $\lambda$ so that for each $ H\in G$ we have $\alpha_H < \lambda.$
\end{claim}
Consider now the collection $J=\{\pi_\lambda(H) | H \in G\}$ where $\lambda$ is the ordinal guaranteed by Claim \ref{6}.  Then from the above claims and the fact that all the relevant maps are open, $J$ is a continuous decomposition of $M_\lambda$. This gives us the  $M_{\omega_1}/G$ is homeomorphic to $M_\lambda/J$.  Since  $M_\lambda$ is a one dimensional metric continuum and $J$ is a continuous decomposition of $M_\lambda$ into pseudo-arcs, it follows that $M_\lambda/J$, and hence $M_{\omega_1}/G$, is a one dimensional metric continuum.
\end{proof}
\section{Generalized Whitney maps on $M_{\omega_1}$}
In \cite{Hernandez} Hern\'andez-Guti\'errez proved that there is a generalized Whitney map from the non-metric pseudo-arc of Smith onto the long arc.   In the following we generalize his result to the continua $M_{\omega_1}$ constructed in Section 3.

Suppose that $A$ is a Hausdorff arc with an order relation $\lhd$ generating the topology; suppose further that $a \lhd b$ are the non-cut points of $A$.  Suppose that $X$ is a continuum then the statement that $\mu: C(X) \to A$ is a generalized Whitney map means that $\mu$ is a continuous function and:
\begin{eqnarray*}
\mbox{If } x \in X  & \mbox{then} & \mu(\{x\}) = a, \\
\mbox{If } H \varsubsetneq K \in C(X)  & \mbox{then} & \mu(H) \lhd \mu(K).
\end{eqnarray*}

Let $\mathbb{L}= \omega_1 \times [0,1) \cup \{\omega_1\}$ with the following order topology, where we use the symbol $<$ for the usual orders on the sets $\omega_1$ and $[0,1)$:
\begin{eqnarray*}
\omega_1 & = & \mbox{min}(\mathbb{L}) \\
(\alpha, r) & \lhd & (\beta, s) \mbox{ if } \alpha > \beta \\
(\alpha, r) & \lhd & (\beta, s) \mbox{ if } \alpha = \beta \mbox{ and } r > s.
\end{eqnarray*}

We call $\mathbb{L}$ with the $\lhd$ order the inverted long arc.  For notational convenience we define $(\alpha, 1) = (\alpha+1, 0)$.

\begin{theorem}\label{Whitney} Let $M_{\omega_1}=\varprojlim\{M_\alpha, f_\alpha^\beta\}_{\alpha<\beta<\omega_1}$ be a non-metric one-dimensional continuum constructed in Section 3. Then there is a generalized Whitney map $\mu: C(M_{\omega_1}) \to \mathbb{L}$.
\end{theorem}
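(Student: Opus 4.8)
The plan is to build the generalized Whitney map $\mu : C(M_{\omega_1}) \to \mathbb{L}$ by stratifying a subcontinuum $H$ according to the ordinal $\alpha_H$ (the first coordinate where $\pi_{\alpha_H}(H)$ is nondegenerate), which was the key invariant isolated in Section 4. The target $\mathbb{L} = \omega_1 \times [0,1) \cup \{\omega_1\}$ is designed precisely so that ``how deep in the inverse limit $H$ first becomes nondegenerate'' is recorded by the $\omega_1$-coordinate (inverted, so that large $\alpha_H$ means $H$ is ``small''), while ``how big $H$ is at that first level'' is recorded by the $[0,1)$-coordinate via an ordinary metric Whitney map. Concretely, for each $\alpha < \omega_1$ fix a Whitney map $w_\alpha : C(M_\alpha) \to [0,1)$ for the metric continuum $M_\alpha$, normalized so $w_\alpha(\{x\}) = 0$ and $\sup w_\alpha < 1$; these exist since each $M_\alpha$ is a metric continuum. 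Then define, for nondegenerate $H$, $\mu(H) = (\alpha_H,\ w_{\alpha_H}(\pi_{\alpha_H}(H)))$, and $\mu(\{x\}) = \omega_1 = \min(\mathbb{L})$ for singletons. By the convention $(\alpha,1) = (\alpha+1,0)$ one should rescale so that as $w_{\alpha_H}(\pi_{\alpha_H}(H))$ increases, $H$ moves monotonically; I would actually set the second coordinate using $1 - (\text{something})$ or choose the orientation of $\lhd$ on each fiber so the monotonicity below comes out right — this is bookkeeping, not substance.

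The two defining properties of a generalized Whitney map then need checking. For the order-monotonicity: suppose $H \subsetneq K$ are nondegenerate subcontinua. By Claim 4 of Section 4 (applied to arbitrary subcontinua, not just members of $G$ — the proof there only used the structure of the inverse limit), $H = \pi_\gamma^{-1}(\pi_\gamma(H))$ for all $\gamma > \alpha_H$, and likewise for $K$. If $\alpha_H \ne \alpha_K$, then since $H \subsetneq K$ forces $\pi_\gamma(H) \subseteq \pi_\gamma(K)$ for all $\gamma$, one shows $\alpha_K \le \alpha_H$, and in fact if $\alpha_K = \alpha_H$ is impossible we get $\alpha_K < \alpha_H$, which (by the inverted order on the $\omega_1$-coordinate) gives $\mu(H) \lhd \mu(K)$ immediately. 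If $\alpha_H = \alpha_K = \alpha$, then $\pi_\alpha(H)$ and $\pi_\alpha(K)$ are nondegenerate subcontinua of $M_\alpha$ with $\pi_\alpha(H) \subseteq \pi_\alpha(K)$; the containment is proper because if $\pi_\alpha(H) = \pi_\alpha(K)$ then by Claim 4 both $H$ and $K$ equal $\pi_\alpha^{-1}$ of the same set, contradicting $H \subsetneq K$. Hence $w_\alpha(\pi_\alpha(H)) < w_\alpha(\pi_\alpha(K))$ by the Whitney property of $w_\alpha$, giving $\mu(H) \lhd \mu(K)$. The singleton case ($\mu(\{x\}) = \min$) is automatic. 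One must also handle $H$ nondegenerate versus $K$ a singleton — impossible — and the degenerate-to-nondegenerate jumps across the $(\alpha,1)=(\alpha+1,0)$ identification, which is why that convention was introduced.

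The main obstacle is continuity of $\mu$ with respect to the Vietoris topology on $C(M_{\omega_1})$ and the order topology on $\mathbb{L}$, since $\mu$ is glued from countably many pieces $w_\alpha$ indexed by ordinals, and continuity at a subcontinuum $H$ with $\alpha_H$ a limit ordinal, or at a sequence of subcontinua whose invariants $\alpha_{K_n}$ increase to some limit, requires care. The key tool is Claim 5 of Section 4: any nondegenerate $H$ has a neighborhood $U$ in $C(M_{\omega_1})$ with $\alpha_K \le \alpha_H + 2$ for all $K \in U$; combined with the fact that $\{K : \alpha_K \ge \beta\}$ is closed, this shows the ``level sets'' $\{K : \alpha_K = \alpha\}$ are relatively open in $\{K : \alpha_K \ge \alpha\}$ and the stratification is locally finite in the ordinal direction, so near any $H$ only finitely many of the $w_\alpha$ are relevant. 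Then continuity reduces to: (i) continuity of $K \mapsto \pi_\alpha(K)$ (projections are continuous, and $\pi_\alpha$ induces a continuous map on hyperspaces); (ii) continuity of $w_\alpha$; and (iii) checking compatibility at the boundaries between consecutive strata via the $(\alpha,1) = (\alpha+1,0)$ convention, which forces the normalization $\sup_K w_\alpha(\pi_\alpha(K))$ over $K$ in the stratum above to match $0$ in the stratum below — this is where I expect the delicate estimate to live, and it may require choosing the $w_\alpha$ inductively and coherently rather than independently, perhaps by pulling back $w_{\alpha-1}$ along $f_{\alpha-1}^\alpha$ and adding a correction term supported on the fibers. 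Finally, continuity at singletons (where $\mu$ takes the minimum value $\omega_1$) follows because a small Vietoris neighborhood of $\{x\}$ consists of subcontinua of small diameter, which forces $\alpha_K$ large, hence $\mu(K)$ close to $\min(\mathbb{L})$ in the order topology.
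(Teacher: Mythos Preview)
Your overall strategy matches the paper's exactly: stratify $C(M_{\omega_1})$ by the invariant $\alpha_H$ (the paper calls the strata $J_\alpha$), send singletons to $\omega_1$, and on each stratum use a metric Whitney map on $M_{\alpha_H}$ to produce the $[0,1)$-coordinate. Your monotonicity argument via Claims~3--4 of Section~4 is correct (those claims do apply to arbitrary subcontinua, not only to elements of the decomposition $G$), and your sketch of continuity at singletons and your use of Claim~5 for local finiteness of the stratification are both what the paper does.

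The one genuine gap is the point you yourself flag: with \emph{independently chosen} Whitney maps $w_\alpha$, the definition $\mu(H)=(\alpha_H,\,w_{\alpha_H}(\pi_{\alpha_H}(H)))$ is \emph{not} continuous at the boundary between strata. Concretely, if $H$ is a full fiber $g\in G_\alpha$ (so $\alpha_H=\alpha$ and $\pi_\alpha(H)=g$), then $H$ can be approached by continua $K_n$ with $\alpha_{K_n}=\alpha-1$ whose projections $\pi_{\alpha-1}(K_n)$ shrink to a point; hence $\mu(K_n)\to(\alpha-1,0)$, while $\mu(H)=(\alpha,w_\alpha(g))$. For continuity one needs $w_\alpha(g)$ to sit at the top of the $\alpha$-interval, and crucially this must hold for \emph{every} fiber $g$ simultaneously, which an arbitrary Whitney map on $M_\alpha$ will not do. Your suggestion to ``pull back $w_{\alpha-1}$ and add a correction on the fibers'' is exactly the right instinct, and the paper carries it out in its Lemma: starting from any Whitney map $\nu$ on $M_\alpha$, set $\mu_{M_\alpha}(K)=\nu(K)/\nu(g_K)$ when $K$ lies in a fiber $g_K$, and $\mu_{M_\alpha}(K)=\mu_{M_{\alpha-1}}(f(K))+1$ otherwise. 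The division by $\nu(g_K)$ forces every fiber to have value exactly $1$, which is what glues the strata together continuously. This normalization is the missing technical ingredient in your proposal.
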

Since the one-dimensional continua $M_{\omega_1}$ can be decomposed into hereditarily indecomposable continua to yield a metric one-dimensional continua, the techniques of  Hern\'andez-Guti\'errez \cite{Hernandez} can be used to construct the Whitney map.  For the sake of completion, we provide an outline of a construction consistent with our approach.  We will need the following fact about continuous decompositions of one-dimensional metric continua as constructed above (see \cite{Le}). Suppose that $M$ is a one-dimensional metric continuum, $X$ is a one-dimensional metric continuum and $G$ is the continuous decomposition of $X$ into pseudo-arcs so that $M = X/G$.  Let $H \subset X$.  If $H$ intersects two elements of $G$, then $H = \cup \{g \in G | H \cap g \ne \emptyset \}$. In particular we have the following property:

\vspace{0.25cm}
\noindent
\textbf{Property D:} \textit{If $H$ is a subcontinuum of $X$, then $H$ is either contained in a single element of the decomposition, or it is a union of decomposition elements.}

\vspace{0.25cm}
\noindent
\textit{Notation:} Given the above, where $f: X \to M$ is the open monotone map associated with the decomposition space, if $H \subset M$ then $f(H) = \cup \{ f(x) | x \in H \}$.

\begin{lemma}  Suppose that $M$ is a one-dimensional metric continuum, $X$ is a one-dimensional metric continuum and $G$ is the continuous decomposition of $X$ into pseudo-arcs so that $M = X/G$.  Suppose that $\mu_M: C(M) \to [0,1]$ is a Whitney map with $\mu_M(M) = 1$.  Then there exists a Whitney map $ \mu_X: C(X) \to [0,2]$ so that $\mu_X(f^{-1}(H)) = \mu_M(H) + 1$ for all $H \in C(M)$.
\end{lemma}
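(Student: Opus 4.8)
The plan is to split the hyperspace $C(X)$ into the two families forced by Property~D and to define $\mu_X$ on each piece separately, gluing them along their overlap. Write $f\colon X\to M$ for the quotient map of the decomposition $G$; since the elements of $G$ are continua, $f$ is monotone, and it is also open and closed, so the induced map $C(f)\colon C(X)\to C(M)$, $A\mapsto f(A)$, is continuous and the map $B\mapsto f^{-1}(B)$, $C(M)\to C(X)$, is well defined and continuous. Put
\[
\mathcal A=\{H\in C(X):f(H)\text{ is a singleton}\}\qquad\text{and}\qquad \mathcal B=\{f^{-1}(B):B\in C(M)\},
\]
the subcontinua lying inside a single element of $G$, and the subcontinua that are unions of elements of $G$ ($f^{-1}(B)$ being a subcontinuum because $f$ is monotone and closed). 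Then $\mathcal A=C(f)^{-1}\big(\{\{x\}:x\in M\}\big)$ is closed, $\mathcal B$ is closed because it is the continuous image of the compact space $C(M)$, Property~D gives $C(X)=\mathcal A\cup\mathcal B$, and $\mathcal A\cap\mathcal B=\{f^{-1}(x):x\in M\}$ is exactly the set of whole decomposition elements.

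The key idea is to rescale an ambient Whitney map fiberwise so that every whole fiber receives value $1$, which is what the $\mathcal B$-side forces at the interface. Fix any Whitney map $\nu\colon C(X)\to[0,1]$ (it exists because $X$ is a metric continuum) and define $\phi\colon M\to(0,1]$ by $\phi(x)=\nu\big(f^{-1}(x)\big)$; this is continuous because $G$ is a continuous decomposition, and strictly positive because each fiber $f^{-1}(x)$ is a nondegenerate pseudo-arc, hence bounded away from $0$ by compactness of $M$. Now set
\[
\mu_X(H)=\frac{\nu(H)}{\phi(x)}\ \text{ for }H\in\mathcal A\text{ with }f(H)=\{x\},\qquad \mu_X(H)=\mu_M\big(f(H)\big)+1\ \text{ for }H\in\mathcal B.
\]
On $\mathcal A\cap\mathcal B$ a whole fiber $f^{-1}(x)$ is assigned $\nu(f^{-1}(x))/\phi(x)=1$ by the first rule and $\mu_M(\{x\})+1=1$ by the second, so the two rules agree; the first is continuous on $\mathcal A$ (since $H\mapsto f(H)$ and $x\mapsto\phi(x)$ are), the second is continuous on $\mathcal B$ (since $C(f)$ and $\mu_M$ are), so by the pasting lemma $\mu_X\colon C(X)\to[0,2]$ is continuous. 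For $H\in C(M)$ we have $f^{-1}(H)\in\mathcal B$, hence $\mu_X(f^{-1}(H))=\mu_M\big(f(f^{-1}(H))\big)+1=\mu_M(H)+1$ by surjectivity of $f$, which is the stated identity.

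Finally I would verify the Whitney axioms. A singleton gets $\mu_X(\{x\})=\nu(\{x\})/\phi(f(x))=0$. For $H\subsetneq K$ in $C(X)$ argue by cases: if $K\in\mathcal A$ then $H$ lies in the same fiber $f^{-1}(x)$ and $\mu_X(H)=\nu(H)/\phi(x)<\nu(K)/\phi(x)=\mu_X(K)$ since $\nu$ is a Whitney map; if $H,K\in\mathcal B$ then any $p\in K\setminus H$ has $f(p)\notin f(H)$, so $f(H)\subsetneq f(K)$ and $\mu_X(H)=\mu_M(f(H))+1<\mu_M(f(K))+1=\mu_X(K)$ since $\mu_M$ is a Whitney map; if $H\in\mathcal A$ while $K\in\mathcal B$ is not a single fiber, then $\mu_M(f(K))>0$ forces $\mu_X(H)\le 1<\mu_X(K)$; and the case $K\in\mathcal A$, $H\in\mathcal B$ cannot occur, because a union of fibers contained in one fiber must equal that fiber. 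The points that need genuine care are the structural facts that $\mathcal A$ and $\mathcal B$ are closed and cover $C(X)$ — this is precisely Property~D together with continuity of the decomposition — and the strict inclusion $f(H)\subsetneq f(K)$ for nested unions of fibers; the normalization that makes every whole fiber worth exactly $1$ is the hinge on which both the continuity and the monotonicity of the glued map turn, and is the main thing to get right.
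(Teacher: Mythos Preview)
Your construction is correct and is essentially the same as the paper's: both take an ambient Whitney map $\nu$ on $C(X)$, define $\mu_X(K)=\nu(K)/\nu(g_K)$ for $K$ contained in a fiber $g_K\in G$, and $\mu_X(K)=\mu_M(f(K))+1$ otherwise, checking that the two formulas agree on whole fibers. Your write-up is in fact more thorough than the paper's sketch, since you explicitly verify that $\mathcal A$ and $\mathcal B$ are closed (so that the pasting lemma applies) and you handle the case analysis for strict monotonicity in detail, whereas the paper simply appeals to Property~D.
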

\begin{proof}
Let $\nu: C(X) \to [0,2]$ be an onto Whitney map.  If $K$ is a subcontinuum of some element of $G$ then let $g_K$ denote that element; for $K \in C(X)$ define

$\mu_X(K) = \left \{
\begin{array}{cl}
\frac{\nu(K)}{\nu(g_K)} & \mbox{if } K \subset g_K \\
\mu_M(f(K)) + 1 & \mbox{if } K \nsubseteq g, \forall  g \in G
\end{array} \right .$

Since for each $g \in G$, $\nu(g) \ne 0$, the function $\frac{\nu(K)}{\nu(g_K)}$ is well defined and continuous.  Furthermore, if $K = g$ for some $g \in G$ we have
\begin{eqnarray*}
\frac{\nu(K)}{\nu(g_K)} & = & \frac{\nu(g_K)}{\nu(g_K)} = 1.
\end{eqnarray*}
From the definition of $\nu$ and property D, it follows if $H, K \in C(X)$ and $H \subsetneq K$ then $\mu_X(H) < \mu_X(K)$.
\end{proof}
Now we are ready to prove Theorem \ref{Whitney}.
\begin{proof}
Let $M_0$ be an arbitrary one dimensional metric continuum.  For $\alpha > 0$,
$G_\alpha = \{ f_{\alpha-1}^{-1})p) | \ p \in X_{\alpha-1} \}$ is the continuous decomposition of $M_\alpha$ into pseudo-arcs used to construct $M_\alpha$.
For each $\alpha$ let $\mu_\alpha: C(M_\alpha) \to [0,1]$ be a Whitney map.  For each successor ordinal there is a Whitney map $\widehat{\mu_\alpha} : M_\alpha \to [0,1]$ so that $\widehat{\mu_\alpha}(f_{\alpha -1}^{-1}(K)) = 1 + \mu_{\alpha}(K)$ for each $K \in C(M_{\alpha-1})$. We now construct $\mu: C(M_{\omega_1}) \to \mathbb{L}$ inductively.  For each $\alpha < \omega_1$ let $J_\alpha$ denote the set of subcontinua of $M_{\omega_1}$ so that $H \in J_\alpha$ if and only if $\alpha$ is the first ordinal such that $\pi_\alpha(H)$ is nondegenerate.  Then $J_\alpha \cap J_\beta = \emptyset$ if $\alpha \ne \beta$ and $C(M_{\omega_1}) = \cup_{\alpha < \omega_1}J_\alpha \cup \{ \{x\} | x \in M_{\omega_1} \}$. Let $\mu_0: C(M_0) \to [0,1]$ be a Whitney map.  Suppose $H \in J_0$ then define:
$$\mu(H) = (0, 1- \mu_0(H)).$$
Observe that if $H$ and $K$ are two elements of $J_0$ and $H \subsetneq K$ then $\mu(H) \lhd \mu(K)$; also observe that the continuity of $\mu_0$ implies that $\mu|_{J_0}$ is continuous.  Also observe that $\mu(M_{\omega_1})$ is the maximal element of $\mathbb{L}$ with respect to the order $\lhd$.
Suppose now that $\alpha$ is such that $\mu(H)$ has been constructed for all $H \in J_\beta$ with $\beta < \alpha$. 

\noindent
\textbf{Case 1: } $\alpha$ is a limit ordinal.  Observe that for limit ordinals $J_\alpha = \emptyset$.

\noindent
\textbf{Case 2:} $\alpha$ is a successor ordinal and $H \in J_\alpha$.  Then $\mu(K)$ had been defined for $K \in \cup_{\beta<\alpha}J_\beta$ so that $\mu(K) \in [(\alpha, 1), (0,0)]$ and restricted to $\cup_{\beta<\alpha}J_\beta$, $\mu$ has the Whitney property.  Since $M_{\alpha-1}$ is a continuous decomposition of $M_\alpha$ into pseudo-arcs satisfying property D, we wish to apply the lemma.  Since topologically $[ (\alpha, 1),(0,0)]_\lhd $ is equivalent to $[0,1]$ and $[(\alpha -1, 1),(\alpha-1, 0)]_\lhd $ is topologically $[1,2]$ we can let $\phi: [0,2] \to [(\alpha -1, 1),(\alpha, 0)]$ be a homeomorphism so that $\phi(1) = (\alpha, 1)$.  Then we apply the lemma to obtain $\mu(H)$ in terms of $\mu_{M_\alpha}(H)$. 

Finally for $x \in M_{\omega_1}$ define $\mu(\{x\}) = \omega_1$. If $H \in J_\alpha$, $K \in J_\beta$ and $\alpha < \beta$ then $H \nsubseteq K$ and $\mu(K) \lhd \mu(H)$.  If $H, K \in J_\alpha$ and $H \subset K$ then by construction $\mu(H) \lhd \mu(K)$.  So the fact that $\mu$ has the Whitney property can be easily verified from the fact that at each stage of the construction the Whitney property is preserved.

We wish to verify the continuity of $\mu$: First observe that for $H \in C(M_{\omega_1})$, if $\alpha$ is the first ordinal so that $\pi_\alpha(H)$ is not degenerate then $\alpha$ is not a limit ordinal.  So $ (\alpha,1) \lhd  \mu(H) \trianglelefteq (\alpha, 0)= (\alpha-1,1)$; then for some $t <1$  there is an open set $U$ in $C(M_{\omega_1})$ so that if $K \in U$ then $ (\alpha, 0) \lhd  \mu(K) \lhd (\alpha-1, t)$.  So $\mu$ is continuous at $H$.   Consider $x \in M_{\omega_1}$ and let $\alpha < \omega_1$, observe that $\cup_{\beta > \alpha} J_\beta \cup \{ \{ x \} | x \in M_{\omega_1} \}$ is a neighborhood of $\{x\}$.  So, from the construction of $\mu$ this gives us continuity at $\{x\} \in  C(M_{\omega_1})$.
\end{proof}
\section{Questions}
We conclude with the following natural questions, motivated by the results for the metric pseudo-arc and pseudo-circle.
\begin{question}
Is Smith's nonmetric pseudo-arc a unique homogeneous hereditarily indecomposable arc-like nonmetric Hausdorff continuum; cf. \cite{Bi4}?
\end{question}
\begin{question}
Does Smith's nonmetric pseudo-arc has the near-homeomorphism property; cf. \cite{WL}, \cite{MS2}?
\end{question}
\begin{question}
Are Cartesian products of Smith's nonmetric pseudo-arcs factor-wise rigid; cf. \cite{BellamyLysko}, \cite{BellamyKennedy}?
\end{question}
\begin{question}
Does the nonmetric pseudo-circle from Corollary \ref{pseudocircle} admit a minimal homeomorphism; cf. \cite{Ha}?
\end{question}
\section{Acknowledgments}
The authors are grateful to Rodrigo Hern\'andez-Guti\'errez for helpful comments that improved the paper. J. Boro\'nski's work was supported by National Science Center, Poland (NCN), grant no. 2015/19/D/ST1/01184 for the project \textit{Homogeneity and Minimality in Compact Spaces}.

\bibliographystyle{amsplain}

\end{document}